\theoremstyle{definition}{
\newtheorem{Def}{{\rm Definition}}
\newtheorem{Ex}{{\rm Example}}
\newtheorem{Rem}{{\rm Remark}}

}
\theoremstyle{plain}
{
\newtheorem{Cor}{Corollary}
\newtheorem{Prop}{Proposition}
\newtheorem{Thm}{Theorem}

}
\begin{document}
\title[New explicit fold maps on 7-dimensional manifolds]{Explicit fold maps on 7-dimensional closed and simply-connected spin manifolds of new classes}
\author{Naoki Kitazawa}
\keywords{Singularities of differentiable maps; fold maps. Differential structures. Higher dimensional closed and simply-connected manifolds. \\
\indent {\it \textup{2020} Mathematics Subject Classification}: Primary~57R45. Secondary~57R19.}
\address{Institute of Mathematics for Industry, Kyushu University, 744 Motooka, Nishi-ku Fukuoka 819-0395, Japan\\
 TEL (Office): +81-92-802-4402 \\
 FAX (Office): +81-92-802-4405 \\
}
\email{n-kitazawa@imi.kyushu-u.ac.jp}
\urladdr{https://naokikitazawa.github.io/NaokiKitazawa.html}
\maketitle
\begin{abstract}
Closed (and simply-connected) manifolds whose dimensions are greater than $4$ are classified via sophisticated algebraic and abstract theory such as surgery theory and homotopy theory. It is difficult to handle $3$ or $4$-dimensional closed manifolds in such ways. However, the latter work via geometric and constructive ways is not so difficult. The assumption that the dimensions are not high enables us to handle the manifolds via diagrams for example. It is difficult to study higher dimensional manifolds in these ways, although it is natural and important.

In the present paper, we present such studies via {\it fold} maps, which are higher dimensional variants of Morse functions. The author previously constructed fold maps on $7$-dimensional closed and simply-connected manifolds satisfying additional conditions on cohomology rings, including so-called {\it exotic} homotopy spheres. This paper concerns fold maps on such manifolds of a wider class.


\end{abstract}


\maketitle
\section{Introduction and fold maps.}
\label{sec:1}
Closed (and simply-connected) manifolds whose dimensions are greater than $4$ are classified via classical and sophisticated algebraic and abstract theory such as surgery theory, homotopy theory, for example.

It is difficult to handle $3$ or $4$-dimensional closed manifolds in such ways. However, this work is, in geometric and constructive ways, not so difficult. The fact that the dimensions are not so high enables us to handle the manifolds via diagrams such as Heegaard diagrams and Kirby's diagrams for example.

It is difficult to study higher dimensional manifolds in these ways, although it is natural and important.
This paper presents studies via Morse functions and {\it fold} maps, which are higher dimensional versions of Morse functions on $7$-dimensional closed and simply-connected manifolds into the $4$-dimensional Euclidean space.

\subsection{Notation on differentiable maps and bundles.}
\label{subsec:1.1}
Throughout this paper, manifolds and maps between manifolds are fundamental objects and they are smooth and of class $C^{\infty}$. Diffeomorphisms on manifolds are always assumed to be smooth. The {\it diffeomorphism group} of a manifold is the group of all diffeomorphism on the manifolds.

For each integer $k \geq 0$, ${\mathbb{R}}^k$ denotes the $k$-dimensional Euclidean space with a canonical differentiable structure and a standard Euclidean metric. For $x \in {\mathbb{R}}^k$, $||x|| \geq 0$ denotes the distance between the origin $0$ and $x$ or equivalently, the value of the Euclidean norm at $x$. $S^k:=\{x \in {\mathbb{R}}^{k+1} \mid ||x||=1.\}$ denotes the $k$-dimensional unit sphere and $D^k:=\{x \in {\mathbb{R}}^{k} \mid ||x|| \leq 1.\}$ denotes the $k$-dimensional unit disk for $k \geq 0$. If a smooth manifold is diffeomorphic to a unit sphere, then it is said to be a {\it standard} sphere.
A (smooth) {\it homotopy} sphere means a smooth manifold homeomorphic to a unit sphere and if it is not diffeomorphic to any unit sphere, then it is said to be an {\it exotic} sphere. If a smooth manifold is diffeomorphic to a unit disk, then it is said to be a {\it standard} disk.

For bundles whose fibers are manifolds, the structure groups are subgroups of the diffeomorphism groups or the bundles are {\it smooth} unless otherwise stated. A {\it linear} bundle is a smooth bundle whose fiber is regarded as a unit sphere or a unit disk and whose structure group acts linearly in a canonical way on the fiber.

A {\it singular} point $p \in X$ of a differentiable map $c:X \rightarrow Y$ is a point at which the rank of the differential
$dc$ of the map is smaller than both the dimensions of the manifold of the domain and the manifold of the target or ${\rm rank} \quad {dc}_p < \min\{\dim X,,\dim Y\}$ holds where ${dc}_p$ denotes the differential at $p$. The set $S(c)$ of all singular points is the {\it singular set} of $c$. We call $c(S(c))$ the {\it singular value set} of $c$. We call $Y-c(S(c))$ the {\it regular value set} of $c$. A {\it singular {\rm (}regular{\rm )} value} is a point in the singular (resp. regular) value set of $c$.

\subsection{Fold maps and investigating algebraic topological and differential topological properties of manifolds via fold maps on them.}
\label{subsec:1.2}
Let $m>n \geq 1$ be integers. A smooth map between an $m$-dimensional smooth manifold with no boundary into an
$n$-dimensional smooth manifold with no boundary is said to be a {\it fold} map if at each singular point $p$, the map is represented as
$$(x_1, \cdots, x_m) \mapsto (x_1,\cdots,x_{n-1},\sum_{k=n}^{m-i}{x_k}^2-\sum_{k=m-i+1}^{m}{x_k}^2)$$
for suitable coordinates and an integer $0 \leq i(p) \leq \frac{m-n+1}{2}$.
We have the following three fundamental properties for this map.
\begin{itemize}
\item For any singular point $p$, $i(p)$ is shown to be unique.
\item The set consisting of all singular points of a fixed index of the map is an ($n-1$)-dimensional closed and smooth submanifold of dimension $n-1$ with no boundary of the $m$-dimensional manifold and the restriction to the singular set is a smooth immersion.
\end{itemize}
{$i(p)$ is said to be the {\it index} of $p$.
In the present paper, we consider fold maps such that the immersions in the third property satisfy the following two conditions.
\begin{enumerate}
\item The preimage of a point in the manifold of the target consists of at most two points.
\item For a preimage consisting of exactly two points as before, let the preimage be denoted by $\{p_1,p_2\}$ and the value at these points by $q$ in the manifold $Y$ of the target. We consider the subspace of the tangent vector space at $q \in Y$ represented as the intersection of the images of the differentials of the immersions at $p_1$ and $p_2$ and the dimension is $\dim Y-2$.
\end{enumerate}
Moreover, for general smooth immersions, we consider such immersions unless otherwise stated.
\begin{Def}
A smooth immersion satisfying the two conditions is said to be {\it normal}. A point in the manifold of the target whose preimage consists of two points is called a {\it crossing}. We extend these notions for a family of (smooth) immersions in a canonical way.
\end{Def}
Fold maps have been important tools in studying geometric properties of manifolds in the branch of the singularity theory of differentiable maps and application to geometry of manifolds as Morse functions are in so-called Morse theory.
Studies related to this were essentially started in 1950s by Thom and Whitney (\cite{thom} and \cite{whitney}). These studies are on smooth maps on manifolds whose dimensions are larger than $2$ into the plane. After various studies, recently, as an important topic, Saeki, Sakuma and so on, have been studying fold maps satisfying appropriate conditions and manifolds admitting them in \cite{saeki}, \cite{saeki2}, \cite{saekisakuma}, \cite{sakuma}, \cite{sakuma2} among others. Several studies such as \cite{kitazawa}, \cite{kitazawa2}, \cite{kitazawa3} and \cite{kitazawa4} of the author are motivated by these studies. The author has been developing these studies mainly to handle wider classes of manifolds admitting explicit fold maps. As closely related studies, the author also have been studying the topologies of {\it Reeb spaces} of fold maps in \cite{kitazawa5}, \cite{kitazawa6}, \cite{kitazawa8} and \cite{kitazawa9} for example: the {\it Reeb space} of a fold map is the polyhedron whose dimension is equal to the dimension of the manifold of the target, defined as the quotient space of the space of the domain obtained by the following rule: two points in the space are equivalent if and only if they are in a same connected component of the preimage of a point. These objects inherit much information on invariants such as homology groups and cohomology rings for manifolds admitting the maps in considerable cases and fundamental and important tools in the studies.

\subsection{Explicit fold maps on $7$-dimensional closed and simply-connected manifolds of a class and the main purpose of the present paper.}
\label{subsec:1.3}
In the present paper, the {\it product} of cohomology classes $u_1$ and $u_2$ means the {\it cup product} $u_1 \cup u_2$ of them. Hereafter, let ${\mathbb{N}}_{\leq x}$ denote the set of all positive integers smaller than or equal
to $x \in \mathbb{R}$.
\begin{Thm}[\cite{kitazawa7}]
\label{thm:1}
Let $A$, $B$ and $C$ be free finitely generated commutative groups each of whose rank is $a$, $b$ and $c$, respectively. Let $\{a_j\}_{j=1}^{a}$, $\{b_j\}_{j=1}^{b}$ and $\{c_j\}_{j=1}^{c}$ be bases of $A$, $B$ and $C$, respectively.
For each integer $1 \leq i \leq b$, suppose that a sequence $\{a_{i,j}\}_{j=1}^{a}$ of integers is given.
Let $p \in B \oplus C$.
In this situation, there exist a $7$-dimensional closed and simply-connected spin manifold $M$ and a fold map
$f:M \rightarrow {\mathbb{R}^4}$ satisfying the following properties.
\begin{enumerate}
\item
\label{thm:1.1}
$H_2(M;\mathbb{Z})$ is isomorphic to $A \oplus B$, $H_4(M;\mathbb{Z})$ is isomorphic to $B \oplus C$, and $H_3(M;\mathbb{Z})$ is free.
\item
\label{thm:1.2}
There exist suitable isomorphisms ${\phi}_2:A \oplus B \rightarrow H_2(M;\mathbb{Z})$ and
${\phi}_4:B \oplus C \rightarrow H_4(M;\mathbb{Z})$ and we can define the duals
${a_j}^{\ast}$, ${b_{j,2}}^{\ast}$, ${b_{j,4}}^{\ast}$ and ${c_j}^{\ast}$ of ${\phi}_2((a_j,0))$, ${\phi}_2((0,b_j))$, ${\phi}_4((b_j,0))$
and ${\phi}_4((0,c_j))$, respectively, to obtain bases of $H^2(M;\mathbb{Z})$ and $H^4(M;\mathbb{Z})$ in a canonical way. We define the composition of ${\phi}_4$ with an isomorphism mapping the elements of the basis to their Poincar\'e duals, denoted by ${\phi}_{4,3}:B \oplus C \rightarrow H^3(M;\mathbb{Z})$ and that of ${\phi}_2$ with an isomorphism mapping the elements of the basis to their Poincar\'e duals, denoted by ${\phi}_{2,5}:A \oplus B \rightarrow H^5(M;\mathbb{Z})$. In this situation, the following properties on poducts hold.
\begin{enumerate}
\item
\label{thm:1.2.1}
\begin{enumerate}
\item The product of ${a_{j_1}}^{\ast}$ and ${a_{j_2}}^{\ast}$ and that of ${b_{j_1,2}}^{\ast}$ and ${b_{j_2,2}}^{\ast}$ vanish for any pair $(j_1,j_2) \in {{\mathbb{N}}_{\leq a}}^2$ and $(j_1,j_2) \in {{\mathbb{N}}_{\leq b}}^2$ respectively.
\item The product of ${a_{j_1}}^{\ast}$ and ${b_{j_2,2}}^{\ast}$ is $a_{j_2,j_1}{b_{j_2,4}}^{\ast} \in H^4(M;\mathbb{Z})$ for any pair $(j_1,j_2) \in {\mathbb{N}}_{\leq a} \times {\mathbb{N}}_{\leq b}$.
\end{enumerate}
\item
\label{thm:1.2.2}
\begin{enumerate}
\item The product of ${a_j}^{\ast}$ and ${\phi}_{4,3}((0,c))$ vanishes for any $j \in {\mathbb{N}}_{\leq a}$ and $c \in C$.
\item The product of ${b_{j,2}}^{\ast}$ and ${\phi}_{4,3}((0,c))$ vanishes for any $j \in {\mathbb{N}}_{\leq b}$ and $c \in C$.
\item The product of ${b_{j_1,2}}^{\ast}$ and ${\phi}_{4,3}((b_{j_2},0))$ vanishes for any pair $(j_1,j_2) \in {{\mathbb{N}}_{\leq b}}^2$ of distinct numbers.
\item The product of ${a_{j_1}}^{\ast}$ and ${\phi}_{4,3}((b_{j_2},0))$ is $a_{j_2,j_1}{\phi}_{2,5}((0,b_{j_2})) \in H^5(M;\mathbb{Z})$ for any pair $(j_1,j_2) \in {\mathbb{N}}_{\leq a} \times {\mathbb{N}}_{\leq b}$.
\item The product of ${b_{i,2}}^{\ast}$ and ${\phi}_{4,3}((b_{i},0))$ is
${\Sigma}_{j=1}^{a} a_{i,j}{\phi}_{2,5}((a_j,0)) \in H^5(M;\mathbb{Z})$ for any $i \in {\mathbb{N}}_{\leq b}$.
\end{enumerate}
\item
\label{thm:1.2.3}
\begin{enumerate}
\item The product of  ${a_j}^{\ast}$ and ${\phi}_{2,5}((a_j,0))$ yields a generator of $H^7(M;\mathbb{Z})$ for any $j \in {\mathbb{N}}_{\leq a}$.
\item The product of ${a_{j_1}}^{\ast}$ and ${\phi}_{2,5}((a_{j_2},0))$ vanishes for any pair $(j_1,j_2) \in {{\mathbb{N}}_{\leq a}}^2$ of distinct numbers.
\item The product of  ${a_j}^{\ast}$ and ${\phi}_{2,5}((0,b))$ vanishes for any $j \in {\mathbb{N}}_{\leq a}$ and $b \in B$.
\item The product of  ${b_{j,2}}^{\ast}$ and ${\phi}_{2,5}((a,0))$ vanishes for any $j \in {\mathbb{N}}_{\leq b}$ and $a \in A$.
\item The product of  ${b_{j,2}}^{\ast}$ and ${\phi}_{2,5}((0,b_j))$ yields a generator of $H^7(M;\mathbb{Z})$ for any $j \in {\mathbb{N}}_{\leq b}$.
\item The product of  ${b_{j_1,2}}^{\ast}$ and ${\phi}_{2,5}((0,b_{j_2}))$ vanishes for any pair $(j_1,j_2) \in {{\mathbb{N}}_{\leq b}}^2$ of distinct numbers.
\item The product of ${b_{j,4}}^{\ast}$ and ${\phi}_{4,3}((b_j,0))$ yields a generator of $H^7(M;\mathbb{Z})$.
\item The product of  ${b_{j_1,4}}^{\ast}$ and ${\phi}_{4,3}((b_{j_2},0))$ vanishes for any pair $(j_1,j_2) \in {{\mathbb{N}}_{\leq b}}^2$ of distinct numbers.
\item The product of  ${b_{j,4}}^{\ast}$ and ${\phi}_{4,3}((0,c))$ vanishes for any $j \in {\mathbb{N}}_{\leq b}$ and $c \in C$.
\item The product of  ${c_j}^{\ast}$ and ${\phi}_{4,3}((b,0))$ vanishes for any $j \in {\mathbb{N}}_{\leq c} $ and $b \in B$.
\item The product of ${c_j}^{\ast}$ and ${\phi}_{4,3}((0,c_j))$ yields a generator of $H^7(M;\mathbb{Z})$ for any $j \in {\mathbb{N}}_{\leq c}$.
\item The product of ${c_{j_1}}^{\ast}$ and ${\phi}_{4,3}((0,c_{j_2}))$ vanishes for any pair $(j_1,j_2) \in {{\mathbb{N}}_{\leq c}}^2$ of distinct numbers.
\end{enumerate}
\end{enumerate}
\item The 3rd and the 5th Stiefel-Whitney classes of $M$ vanish.
\item Let $d$ be the isomorphism $d:H_4(M;\mathbb{Z}) \rightarrow H^4(M;\mathbb{Z})$ mapping the elements of the basis to their duals. The first Pontryagin class of $M$ is $4d \circ {\phi}_4(p) \in H^4(M;\mathbb{Z})$. The 4th Stiefel-Whitney class of $M$ vanishes.
\item $f {\mid}_{S(f)}$ is an embedding.
\item The index of each singular point is always $0$ or $1$.
\item Preimages of regular values are empty or diffeomorphic to $S^3$ or $S^3 \sqcup S^3$.
\end{enumerate} 
\end{Thm}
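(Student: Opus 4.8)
The plan is to construct $f$ together with its Reeb space $W_f$, a compact $4$-dimensional polyhedron, starting from a simple model and performing a finite sequence of \emph{local} modifications, one block for each prescribed generator. As the model we take the fold map $f_0 \colon S^7 \to {\mathbb{R}}^4$ obtained by restricting the canonical projection ${\mathbb{R}}^8={\mathbb{R}}^4 \times {\mathbb{R}}^4 \to {\mathbb{R}}^4$ to the unit sphere: its Reeb space is $D^4$, its singular set is a standard $S^3$ mapped by an embedding onto $\partial D^4$, every index is $0$, and the preimage of a regular value is $\emptyset$ or $S^3$. Over a small $4$-disk $D$ contained in the interior of $D^4$ the preimage is the trivial bundle $S^3 \times D$, and these trivial regions are where the subsequent modifications are supported.

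Over pairwise disjoint such disks we perform \emph{bubbling}-type operations adapted to the groups $A$, $B$ and $C$. Each operation replaces a trivial piece $S^3 \times D$ by the total space of a suitable orientable linear sphere bundle carrying a controlled fold structure over a slightly enlarged disk, arranged so that $M$ stays closed, simply-connected and spin, the index of every singular point stays $0$ or $1$, and the preimage of a regular value stays $\emptyset$, $S^3$ or $S^3 \sqcup S^3$. For each $c_j$ we insert a block contributing one free generator of $H_4$, with Poincar\'e dual in $H^3$, and no new torsion, so that $H_3$ stays free and all products involving this generator vanish as required; for each $a_j$ a block contributing one free generator of $H_2$ whose square vanishes, whose Poincar\'e dual lies in $H^5$, and whose self-pairing with that dual generates $H^7$; and for each $b_j$ a single combined block contributing one generator of $H_2$ \emph{and} one of $H_4$, linked through the data $\{a_{i,j}\}$: the degrees of the relevant clutching and attaching maps are chosen so that ${a_{j_1}}^{\ast} \cup {b_{j_2,2}}^{\ast}=a_{j_2,j_1}{b_{j_2,4}}^{\ast}$ and ${b_{i,2}}^{\ast} \cup {\phi}_{4,3}((b_i,0))={\Sigma}_{j=1}^{a}a_{i,j}{\phi}_{2,5}((a_j,0))$, the remaining products being those forced by Poincar\'e duality on a closed $7$-manifold. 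Since every modification is supported in a disk, the isomorphisms ${\phi}_2$ and ${\phi}_4$ and the induced maps ${\phi}_{4,3}$ and ${\phi}_{2,5}$ can be read off directly, and the Mayer--Vietoris and K\"unneth computations proving \ref{thm:1.1} and \ref{thm:1.2} reduce to the corresponding ones for the individual blocks.

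We treat the characteristic classes last. The manifold $M$ is spin because each block is assembled from orientable linear bundles whose structure groups are lifted so that $w_2$ vanishes; the vanishing of the third, fourth and fifth Stiefel--Whitney classes then follows, and the same bookkeeping, together with the torsion-freeness of the blocks, keeps $H_3(M;\mathbb{Z})$ free. The first Pontryagin class is the only characteristic class left with a free parameter: above the part of $M$ lying over the ``$B \oplus C$'' disks the $S^3$-fibration may be re-clutched by an element of ${\pi}_3(\mathrm{SO}(4))$, and since the first Pontryagin class of an orientable $S^3$-bundle of Euler number $0$ over a $4$-dimensional cycle ranges exactly over the multiples of $4$ of a generator, one can arrange $p_1(M)=4\,d \circ {\phi}_4(p)$ for the given $p \in B \oplus C$ without disturbing the rest of the construction.

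The step I expect to be the main obstacle is the \emph{simultaneous} realization of the mixed product relations --- in particular the compatibility between the relation giving ${a_{j_1}}^{\ast} \cup {b_{j_2,2}}^{\ast}$ and the one giving ${b_{i,2}}^{\ast} \cup {\phi}_{4,3}((b_i,0))$, both governed by the same integers $a_{i,j}$ --- together with the differential-topological constraints that $f {\mid}_{S(f)}$ be an embedding, that all indices stay in $\{0,1\}$, and that regular fibers stay among $\emptyset$, $S^3$ and $S^3 \sqcup S^3$. Each block is easy in isolation; the work lies in showing that the blocks can be placed over pairwise disjoint disks in ${\mathbb{R}}^4$ without producing extra singular fibers or extra (co)homology, and in tracking how Poincar\'e duality between $H^2$ and $H^5$ and between $H^4$ and $H^3$ pins down precisely the listed generators of $H^7$. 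Once the construction is complete, the properties concerning $f {\mid}_{S(f)}$, the indices and the regular fibers are immediate from it, since every modification was chosen to be local and of the prescribed fold type.
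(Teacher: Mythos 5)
Your overall strategy---start from the projection $S^7\to{\mathbb{R}}^4$, modify the map over regions where it is a trivial $S^3$-bundle, one block per prescribed generator, and fix $p_1$ at the end by re-clutching tubular neighborhoods of regular fibers via ${\pi}_3(\mathrm{SO}(4))$---matches the paper's general framework (the surgery operations called ATSS here, and the final Pontryagin-class adjustment is exactly the paper's last step). But there is a genuine gap at the point you yourself flag as the main obstacle, and it is not a technical nuisance but a structural obstruction to your scheme. If every modification is supported over pairwise \emph{disjoint} disks $D\subset{\rm Int}\,D^4$, each glued into a trivial piece $S^3\times D$, then a Mayer--Vietoris argument shows the resulting cohomology ring decomposes as in a connected sum: cup products of positive-degree classes originating from different blocks vanish whenever the total degree is below the top degree $7$. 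In particular ${a_{j_1}}^{\ast}\cup{b_{j_2,2}}^{\ast}$ (degree $2+2=4$) and ${b_{i,2}}^{\ast}\cup{\phi}_{4,3}((b_i,0))$ restricted against the classes ${\phi}_{2,5}((a_j,0))$ would necessarily vanish, so no choice of ``clutching and attaching degrees'' inside a single combined $b_i$-block can produce a nonzero $a_{i,j}$: that coefficient is a cross-term between the $b_i$-block and the $a_j$-block, and disjointly supported blocks cannot interact in degree $4$ or $5$.

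The construction the paper relies on avoids this by \emph{not} localizing the $b_i$-surgeries in disks disjoint from the $a_j$-cycles. One starts from a special generic map $f_0$ on a connected sum of $a$ copies of $S^2\times S^5$ (Example \ref{ex:2}), whose Reeb space $W_{f_0}$ is a boundary connected sum of copies of $S^2\times D^2$ already carrying the classes $a_j$, and then performs the ATSS along embedded $2$-spheres $S_i\subset{\rm Int}\,W_{f_0}$ chosen so that ${c_i}_{\ast}([S_i])={\Sigma}_{j=1}^{a}a_{i,j}a_j$ in $H_2(W_{f_0};\mathbb{Z})$; the $C$-generators come from surgeries along isolated points (the $k=0$ case of Remark \ref{rem:1}). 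It is precisely the homological position of the surgery locus relative to the pre-existing $2$-cycles---not any internal parameter of an isolated block---that produces the mixed products with coefficients $a_{i,j}$, as in the proofs of Propositions \ref{prop:2} and \ref{prop:3} and of Theorem \ref{thm:5}. Your treatment of the characteristic classes and of the constraints on $S(f)$, the indices, and the regular fibers is consistent with the paper, but the realization of nonzero $a_{i,j}$ requires replacing the disjoint-disk bookkeeping by this global placement of the surgery spheres.
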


\begin{Thm}[\cite{kitazawa2} for example.]
\label{thm:2}
Every $7$-dimensional homotopy sphere admits a fold map $f$ into ${\mathbb{R}}^4$ satisfying the following properties. 
\begin{enumerate}
\item $f {\mid}_{S(f)}$ is embedding and $f(S(f))=\{x \in {\mathbb{R}}^4 \mid ||x||=1,2,3\}$.
\item The index of each singular point is always $0$ or $1$.
\item For each connected component of the regular value set of $f$, the preimage of a regular value is, empty, diffeomorphic to $S^3$, diffeomorphic to $S^3 \sqcup S^3$ and diffeomorphic to $S^3 \sqcup S^3 \sqcup S^3$, respectively.
\end{enumerate}
\end{Thm}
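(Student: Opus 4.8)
The plan is to realise $f$ as a \emph{round fold map}: one whose singular value set is a disjoint union of spheres centred at the origin of $\mathbb{R}^4$, here exactly $\{\|x\|=1\}$, $\{\|x\|=2\}$, $\{\|x\|=3\}$, and which is of product type over each complementary spherical shell. Such a map is built from three concentric layers lying over $\{\|x\|\le 1\}$, $\{1\le\|x\|\le 2\}$, $\{2\le\|x\|\le 3\}$, arranged so that the regular fibre over a point of $\{\|x\|<1\}$, $\{1<\|x\|<2\}$, $\{2<\|x\|<3\}$, $\{\|x\|>3\}$ is a disjoint union of $3$, $2$, $1$, $0$ copies of $S^3$; this is precisely the third listed property. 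Across each sphere the number of $S^3$-components drops by one as $\|x\|$ increases, which is produced by a single index-$0$ fold, with radial normal form $\sum_k x_k^2$, whose definite side is the inner side: it contributes one $S^3$ there and nothing on the outer side, the other $S^3$'s passing over the singular sphere away from the fold locus. Across $\{\|x\|=3\}$ this caps off the last $S^3$, so nothing lies over $\{\|x\|>3\}$. Keeping the three folds embedded and mapped diffeomorphically onto the three spheres yields the first two listed properties (all singular points here have index $0$, consistent with the stated ``$0$ or $1$'').

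For the layers themselves one uses the description of a homotopy $7$-sphere in terms of clutching data. Recall $\Theta_7\cong\mathbb{Z}/28\mathbb{Z}$ (Kervaire--Milnor), that every $7$-dimensional homotopy sphere admits a Morse function with two critical points and hence is a twisted sphere, and that a generator of $\Theta_7$ is the total space of a smooth $S^3$-bundle over $S^4$ (Milnor, Eells--Kuiper). One starts from the standard fold map of $S^7$ into $\mathbb{R}^4$ --- image $D^4$, a single fold $S^3$ of index $0$, every nonempty fibre an $S^3$ --- and performs a \emph{bubbling} modification supported over $\{\|x\|\le 2\}$ and away from that fold: it creates the two inner spheres $\{\|x\|=1\}$ and $\{\|x\|=2\}$ together with the extra $S^3$-sheets, and simultaneously cuts the $7$-manifold along an $S^3\times S^3$ lying over one of the new shells and reglues by a fibre rotation $(u,w)\mapsto(u,c(u)w)$ coming from a smooth map $c\colon S^3\to SO(4)$, built into the middle layer as an ambient radial family of rotations of the $S^3$-factor. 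After homotoping $c$ to be the identity near the endpoints of the radial parameter, the three layers patch to a smooth fold map, and a local normal-form computation confirms the index claim and that $f|_{S(f)}$ is the evident embedding $S^3\sqcup S^3\sqcup S^3\hookrightarrow\{\|x\|=1\}\sqcup\{\|x\|=2\}\sqcup\{\|x\|=3\}$.

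The main obstacle is the remaining identification: that the $7$-manifold assembled from the three layers is diffeomorphic to the prescribed homotopy sphere, and that \emph{every} element of $\Theta_7$ is realised. This is where the classification of homotopy $7$-spheres must be fed back in. From the layered picture one reads off a parallelizable coboundary of the total space, computes its Eells--Kuiper $\mu$-invariant as an explicit function of the regluing data $c$, and checks surjectivity onto $\mathbb{Z}/28\mathbb{Z}$; if one prefers to insert only the clutching of the bundle generator directly, the remaining classes follow by a connected-sum operation on these round fold maps performed in the common central region, which induces the connected sum on total spaces and hence sweeps out all of $\Theta_7$. The other points --- smoothness along the three singular spheres, the bookkeeping of indices, and the embeddedness of $f|_{S(f)}$ --- are routine local verifications.
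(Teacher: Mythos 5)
The paper does not actually prove Theorem~2 here (it is quoted from the author's earlier work \cite{kitazawa2}), but your proposal contains a concrete error that the surrounding text of the paper already rules out. You assert that each of the three singular spheres is a definite fold of index $0$ whose definite side is the inner side, so that a new $S^3$-component is ``born'' as $\|x\|$ decreases past each of $\|x\|=1,2,3$, and you conclude that all singular points have index $0$. First, a fold map all of whose singular points have index $0$ is by definition a special generic map, and the paper records (Saeki, Saeki--Sakuma, Wrazidlo; see subsection~2.1) that $7$-dimensional exotic spheres admit no special generic maps into $\mathbb{R}^4$; so this local model cannot realize any nontrivial element of $\Theta_7$. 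Second, and more basically, your model does not even produce a connected manifold: the $S^3$ born at $\{\|x\|=1\}$ persists over the whole open inner disk and is capped off by the fold locus lying over $\{\|x\|=1\}$, hence sweeps out a closed $7$-manifold sitting inside the source; being closed in a connected $7$-manifold it would have to be a whole connected component, so your three layers assemble to $S^7\sqcup S^7\sqcup S^7$ rather than a homotopy sphere. The correct structure --- and the reason the statement allows index ``$0$ or $1$'' --- is that only the outermost fold over $\{\|x\|=3\}$ is definite; the folds over $\{\|x\|=1\}$ and $\{\|x\|=2\}$ are indefinite of index $1$, with radial local model the one-handle cobordism splitting $S^3$ into $S^3\sqcup S^3$ (the map $\tilde{f}_{k_2,0}$ appearing in the proof of Proposition~1 is exactly this piece). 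It is in the gluing data for these index-$1$ splittings that the exotic differentiable structures enter.

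Your regluing step does not repair this: cutting along a fibered $S^3\times S^3$ in a regular shell and regluing by $(u,w)\mapsto(u,c(u)w)$ commutes with $f$ and leaves the fold indices unchanged, so it cannot convert a special generic map into a map on an exotic sphere. The realization strategy you sketch at the end is, however, the right one once the local model is corrected: the two-sphere ($l=2$) round fold maps built from index-$1$ splittings realize exactly the $16$ classes represented by total spaces of linear $S^3$-bundles over $S^4$ (Milnor, Eells--Kuiper), and nesting two such maps concentrically realizes the connected sum, which exhausts $\Theta_7\cong\mathbb{Z}/28\mathbb{Z}$; this is precisely the content of Theorem~4 and the remark following it.
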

We will add precise expositions on Theorem \ref{thm:2} in the next section.
The present paper shows construction of fold maps into the $4$-dimensional Euclidean space on $7$-dimensional closed and simply-connected manifolds of a wider class. We will connect the present study and closely related studies to understand these manifolds in geometric and constructive ways. Existence problems of fold maps have been solved in \cite{eliashberg} and \cite{eliashberg2} by Eliashberg and others. Construction of fold maps has been more difficult in considerable cases. Such construction produces a main topic of studies of the author.

As an additional remark, the class of $7$-dimensional closed and simply-connected manifolds still produces attractive topics.
\cite{kreck} and \cite{wang} are recent explicit studies on explicit algebraic topological understandings of $7$-dimensional closed and simply-connected manifolds.

\subsection{The content of the present paper.}
\label{subsec:1.4}
The organization of the paper is as the following.
In the next section, we remark on subsection \ref{subsec:1.3}: we explain that fold maps of suitable classes affect the differentiable structures of the manifolds admitting them. After that, we construct new fold maps on $7$-dimensional closed, simply-connected and spin manifolds. The class of manifolds contains the manifolds in Theorems \ref{thm:1} and \ref{thm:2} and this is one of the main theorems (Theorem \ref{thm:5}). Key methods are based on \cite{kitazawa5}, \cite{kitazawa6} and \cite{kitazawa8}. Note that these articles are mainly for Reeb spaces and investigating the homology groups and the cohomology rings of manifolds are different from investigating those of the Reeb spaces and more difficult.
The third section is for a remark on the integral cohomology rings of the manifolds in Theorem \ref{thm:5}.
The last section is devoted to appendices. We present Theorem \ref{thm:6} as a slight extension of Theorem \ref{thm:5}. Remark \ref{rem:2} is a short remark on existence and construction of fold maps.

\section{Construction of new family of fold maps on $7$-dimensional closed and simply-connected manifolds of a new class.}
\label{sec:2}
Throughout the present paper, $M$ is an $m$-dimensional closed and connected manifold, $n<m$ is a positive integer and $f:M \rightarrow {\mathbb{R}}^n$ is a smooth map unless otherwise stated.
\subsection{Additional explanations on subsection \ref{subsec:1.3}: fold maps of suitable classes affect the differentiable structures of the manifolds admitting them.}
A {\it special generic} map is a fold map such that the index of each singular point is $0$. The class of special generic maps contains the class of Morse functions on closed manifolds with exactly two singular points, which are central objects in Reeb's theorem, characterizing spheres topologically except the case where the manifold is $4$-dimensional: in this case, a $4$-dimensional standard sphere is characterized as this. It also contains canonical projections of unit spheres.
Example \ref{ex:2} also presents simplest special generic maps, extending the class of the special generic maps into Euclidean spaces whose dimensions are greater than $1$ just before.
For example, Saeki, Sakuma and Wrazidlo discovered that special generic maps restrict the topologies and the differentiable structures of manifolds admitting them in considerable cases. One of their studies revealed that $7$-dimensional exotic spheres admit no special generic maps into
${\mathbb{R}}^4$, ${\mathbb{R}}^5$ and ${\mathbb{R}}^6$ and that some of such manifolds admit no such maps into
${\mathbb{R}}^3$ for example. See \cite{saeki}, \cite{saekisakuma} and  \cite{wrazidlo} for example.
\begin{Thm}[\cite{kitazawa7}]
\label{thm:3}
In the situation of Theorem \ref{thm:1}, if at least one of the following two hold, then $M$ never admits a special generic map into ${\mathbb{R}}^4$.
\begin{enumerate}
\item $p \in B \oplus C$ is not zero.
\item In some sequence $\{a_{i,j}\}_{j=1}^{a}$, at least one non-zero number exists.
\end{enumerate}
\end{Thm}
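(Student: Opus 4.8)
The plan is to argue by contradiction. Suppose $M$ admitted a special generic map $g \colon M \rightarrow {\mathbb{R}}^4$. From the structure theory of special generic maps due to Saeki (\cite{saeki}; see also \cite{saekisakuma}) I would extract two cohomological restrictions on $M$: that (a) every cup product $H^2(M;{\mathbb{Z}}) \times H^2(M;{\mathbb{Z}}) \rightarrow H^4(M;{\mathbb{Z}})$ vanishes, and that (b) $p_1(M)=0$. I would then show that (a) contradicts alternative (2) and (b) contradicts alternative (1), using Theorem \ref{thm:1}, which finishes the proof.

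For the structure: since $M$ is closed and connected, the Reeb space $W_g$ of $g$ is a compact connected smooth $4$-manifold (immersed in ${\mathbb{R}}^4$), and as $S(g) \neq \emptyset$ we have $\partial W_g \neq \emptyset$; hence $W_g$ has the homotopy type of a CW complex of dimension at most $3$, so in particular $H^j(W_g;{\mathbb{Z}})=0$ for $j \geq 4$. Moreover $M$ is diffeomorphic to the boundary of the total space $E$ of a linear $D^4$-bundle over $W_g$; this $E$ is a compact $8$-manifold with $\partial E = M$ which deformation retracts onto its zero section, a copy of $W_g$, so $H^{\ast}(E;{\mathbb{Z}}) \cong H^{\ast}(W_g;{\mathbb{Z}})$ vanishes in degrees $\geq 4$. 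Lefschetz duality for $E$ (with twisted coefficients if $E$ is non-orientable), combined with this homotopy-dimension bound on $W_g \simeq E$, gives $H^k(E,M;{\mathbb{Z}})=0$ for $k \leq 4$. Feeding this into the cohomology exact sequence of the pair $(E,M)$, the restriction $i^{\ast} \colon H^2(E;{\mathbb{Z}}) \rightarrow H^2(M;{\mathbb{Z}})$ along the inclusion $i \colon M \hookrightarrow E$ is an isomorphism. Now (a) follows: for $u,v \in H^2(M;{\mathbb{Z}})$ write $u = i^{\ast}\tilde u$, $v = i^{\ast}\tilde v$, so $u \cup v = i^{\ast}(\tilde u \cup \tilde v) \in i^{\ast}\bigl(H^4(E;{\mathbb{Z}})\bigr) = 0$. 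And (b) follows: since $TM \oplus {\varepsilon}^1 \cong i^{\ast}TE$, we get $p_1(M) = i^{\ast}p_1(E) \in i^{\ast}\bigl(H^4(E;{\mathbb{Z}})\bigr) = 0$.

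To conclude, observe that $H_4(M;{\mathbb{Z}})$ and $H_3(M;{\mathbb{Z}})$ are free by Theorem \ref{thm:1}~(\ref{thm:1.1}), hence $H^4(M;{\mathbb{Z}})$ is free by the universal coefficient theorem. In case (1), $p \neq 0$ gives $d \circ {\phi}_4(p) \neq 0$, and since $H^4(M;{\mathbb{Z}})$ is torsion-free, $p_1(M) = 4\,d \circ {\phi}_4(p) \neq 0$, contradicting (b). In case (2), there are indices with $a_{i,j} \neq 0$, and Theorem \ref{thm:1}~(\ref{thm:1.2.1}) then gives, for ${a_j}^{\ast}, {b_{i,2}}^{\ast} \in H^2(M;{\mathbb{Z}})$, the product ${a_j}^{\ast} \cup {b_{i,2}}^{\ast} = a_{i,j}{b_{i,4}}^{\ast}$, which is nonzero because ${b_{i,4}}^{\ast}$ is a member of a basis of the free group $H^4(M;{\mathbb{Z}})$; this contradicts (a). In either case $M$ admits no special generic map into ${\mathbb{R}}^4$.

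The main point requiring care is the correct invocation of Saeki's structure theorem, which supplies both the compact $4$-manifold-with-boundary $W_g$ (so that $H^4(W_g;{\mathbb{Z}})=0$) and the diffeomorphism $M \cong \partial E$ for a linear $D^4$-bundle $E$ over $W_g$; these two facts are exactly what turn the hypothesis into the vanishing statements (a) and (b). Once they are in place, everything else is standard (co)homological bookkeeping, and the contradiction with Theorem \ref{thm:1} is immediate.
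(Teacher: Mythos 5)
Your proposal is correct, and it is essentially the intended argument: the paper itself does not reprove Theorem \ref{thm:3} (it only cites \cite{kitazawa7}), but the proof there rests on exactly the structure theorem you invoke --- Saeki's result that for a special generic map $g:M^7 \rightarrow {\mathbb{R}}^4$ the manifold $M$ bounds a compact $8$-dimensional (linear $D^4$-bundle) total space $E \simeq W_g$ of homotopy dimension at most $3$, which forces all products $H^2 \times H^2 \rightarrow H^4$ and $p_1(M)$ to vanish, contradicting Theorem \ref{thm:1} under either hypothesis. Your cohomological bookkeeping (Lefschetz duality giving $H^k(E,M)=0$ for $k \leq 4$, hence $i^{\ast}$ an isomorphism on $H^2$, and the torsion-freeness of $H^4(M;\mathbb{Z})$ ensuring $4d\circ{\phi}_4(p) \neq 0$) is sound.
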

\begin{Def}
\label{def:2}
For a fold map $f:M \rightarrow {\mathbb{R}}^4$ as in Theorem \ref{thm:2} where $f(S(f))={\sqcup}_{r=1}^l \{x \in {\mathbb{R}}^4 \mid ||x||=r.\}$ instead of the original condition and the preimage of a point in the exactly one connected component of the regular value set which is diffeomorphic to an open disc, is the disjoint union of $l$ copies of $S^3$ for an integer $l>0$: originally $l=3$. We also assume that for each connected component $C$ of the singular value set and its small closed tubular neighborhood $N(C)$, the composition of $f {\mid}_{f^{-1}(N(C))}:f^{-1}(N(C)) \rightarrow N(C)$ with a canonical projection to $C$ gives a trivial bundle over $C$. We say that such $f$ is an {\it $l$ normal round fold map with standard spheres}.
\end{Def}
We can show the following theorem thanks to Theorem \ref{thm:2} with \cite{eellskuiper}, \cite{milnor} and other related theory on ($7$-dimensional) smooth homotopy spheres. It is also a classical important fact that there exist $28$ types of oriented smooth homotopy spheres.
\begin{Thm}[\cite{kitazawa2} for example.]
\label{thm:4}
Every $7$-dimensional smooth homotopy sphere $M$ admits a $3$ normal round fold map with standard spheres
$f:M \rightarrow {\mathbb{R}}^4$ as in Theorem \ref{thm:2}. Moreover, for a $7$-dimensional homotopy sphere $M$, we have the following characterization.
\begin{enumerate}
\item $M$ admits a $1$ normal round fold map with standard spheres $f:M \rightarrow {\mathbb{R}}^4$ if and only if $M$ is a standard sphere.
\item $M$ admits a $2$ normal round fold map with standard spheres $f:M \rightarrow {\mathbb{R}}^4$ if and only if the homotopy sphere is one of 16 types of the 28 types {\rm (}oriented homotopy spheres of these exactly $16$ types are represented as total spaces of linear $S^3$-bundles over $S^4$ and a standard sphere is one of these 16 types{\rm )}.
\end{enumerate}
\end{Thm}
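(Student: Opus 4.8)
The plan is to derive the first assertion directly from Theorem \ref{thm:2} and to treat the two characterizations by decomposing $M=f^{-1}(\{x\in\mathbb{R}^4\mid \|x\|\leq l\})$ along its concentric singular $3$-spheres. For the first assertion, Theorem \ref{thm:2} already provides, for a given $7$-dimensional homotopy sphere $M$, a fold map $f\colon M\to\mathbb{R}^4$ with $f(S(f))=\{\|x\|=1\}\sqcup\{\|x\|=2\}\sqcup\{\|x\|=3\}$, every index $0$ or $1$, $f\mid_{S(f)}$ an embedding, and the innermost disc-like component of the regular value set ($\{\|x\|<1\}$) having preimage $S^3\sqcup S^3\sqcup S^3$. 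What is left to check is the last clause of Definition \ref{def:2}: for each singular $3$-sphere $C$ the composite $f^{-1}(N(C))\to N(C)\to C$ is a trivial bundle. I would read this off the local normal forms: near a fold the map is a product over $C\cong S^3$ of a linear model ($\sum_k x_k^2$ in index $0$, $\sum_k x_k^2-x_m^2$ in index $1$), so $f^{-1}(N(C))\to C$ is a linear bundle over $S^3$, whose classifying map lies in $\pi_2$ of a compact Lie group and hence is trivial. Thus $f$ is a $3$ normal round fold map with standard spheres.

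For $l=1$, the ``if'' direction is witnessed by the restriction to $S^7\subset\mathbb{R}^4\times\mathbb{R}^4$ of the projection onto the first factor, which is a special generic map with singular value set the single round sphere $\{\|x\|=1\}$ and generic fibre $S^3$. For ``only if'', let $M$ admit such an $f$. Because there is exactly one singular $3$-sphere and $f\mid_{S(f)}$ is an embedding, the singular set is a single $S^3$ of constant index; since the fibre passes from $\emptyset$ (over $\|x\|>1$) to a nonempty $S^3$, and a fold of index $\geq 1$ has nonempty preimage on both sides, that index is $0$, so $f$ is special generic. By the structure theorem for special generic maps (\cite{saeki}), $f$ factors through its Reeb space $W_f$, a compact $4$-manifold with boundary immersed into $\mathbb{R}^4$, and $M$ is the boundary of a linear $D^4$-bundle over $W_f$; connectedness of the fibre over $\{\|x\|<1\}$ forces $W_f\cong D^4$, whence $M\cong\partial(D^4\times D^4)=S^7$ is standard.

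For $l=2$, call the outer and inner singular spheres $\{\|x\|=2\}$ and $\{\|x\|=1\}$. Across $\{\|x\|=2\}$ the fibre passes from $\emptyset$ to $S^3$, so those folds have index $0$, and an index-$0$ analysis over the fold $S^3$ together with a product collar identifies $f^{-1}(\{1<\|x\|\leq 2\})$ with $S^3\times D^4$. The singular set over $\{\|x\|=1\}$ is again one $S^3$ of constant index; index $0$ there would make $f^{-1}(\{\|x\|\leq 1\})$, and hence $M$, disconnected, so for our $M$ the inner folds have index $1$, and the ``standard spheres'' hypothesis forces the fibrewise change to be the cutting of $S^3$ along an unknotted $2$-sphere with trivial tubular neighbourhood, yielding $S^3\sqcup S^3$. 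Tracking this, $f^{-1}(\{\|x\|\leq 1+\varepsilon\})$ is two copies of $S^3\times D^4$ joined by a fibrewise $1$-handle over $S^3$, i.e.\ $S^3\times(D^4\natural D^4)\cong S^3\times D^4$, where $\natural$ is the boundary connected sum. Hence $M$ is the union of two copies of $S^3\times D^4$ along $S^3\times S^3$, that is, the total space of an $S^3$-bundle over $S^4$ with structure group $\mathrm{Diff}^{+}(S^3)$; by Hatcher's theorem $\mathrm{Diff}^{+}(S^3)\simeq SO(4)$, so the bundle is equivalent to a linear one, and since $M$ is a homotopy sphere its Euler number is $\pm 1$. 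By Milnor's examples and the Eells--Kuiper invariant (\cite{milnor}, \cite{eellskuiper}), and using that there are exactly $28$ oriented homotopy $7$-spheres, the total spaces of such bundles realize precisely $16$ of these $28$ types, the standard sphere (the quaternionic Hopf bundle) among them. Conversely, for $M$ a linear $S^3$-bundle over $S^4$ with Euler number $\pm 1$ one splits $S^4=D^4\cup_{S^3}D^4$, trivializes $M$ over each disc, and builds $f$ with an index-$0$ fold over the centre of one disc (mapped onto $\{\|x\|=2\}$) and an index-$1$ fold over the other disc (mapped onto $\{\|x\|=1\}$) realizing the fibrewise cutting, the clutching function being absorbed into the gluing over $\{\|x\|=1\}$; explicit local models confirm all clauses of Definition \ref{def:2}.

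The step I expect to be the main obstacle is the inner-sphere analysis for $l=2$: proving rigorously that, under ``standard spheres'' and connectedness, $f^{-1}(\{\|x\|\leq 1+\varepsilon\})$ is forced to be $S^3\times D^4$, so that $M$ really is two copies of $S^3\times D^4$ glued along $S^3\times S^3$; and, in the converse direction, that every admissible clutching function is realized by such a fold map with all required properties. The structure theorem for special generic maps, Hatcher's theorem on $\mathrm{Diff}(S^3)$, and the Eells--Kuiper classification of $7$-dimensional homotopy spheres enter only as cited black boxes.
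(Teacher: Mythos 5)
The paper itself does not prove Theorem \ref{thm:4}: it is imported from \cite{kitazawa2} with a one-line attribution to \cite{eellskuiper} and \cite{milnor}, so your argument has to stand on its own. The first assertion and the $l=1$ characterization are essentially sound: the normal data over each fold sphere is a bundle over $S^3$ with structure group reducible to a compact Lie group, hence trivial since $\pi_2$ of such a group vanishes; and for $l=1$ the single fold must be definite, $W_f\cong D^4$ follows from connectedness of the inner fibre, and Saeki's structure theorem gives $M\cong\partial(D^4\times D^4)=S^7$.

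The genuine gap is in the forward direction for $l=2$, exactly where you flagged it. The claim that $f^{-1}(\{\|x\|\le 1+\varepsilon\})$ is ``two copies of $S^3\times D^4$ joined by a fibrewise $1$-handle over $S^3$, i.e.\ $S^3\times(D^4\natural D^4)\cong S^3\times D^4$'' silently identifies two different trivializations of the $(S^3\sqcup S^3)$-bundle over the sphere $\{\|x\|=1-\varepsilon\}$: the one restricted from the trivial bundle over the inner disc, and the one coming from the product structure $V\times C$ over the indefinite fold sphere $C$ guaranteed by Definition \ref{def:2}. These differ by a clutching map $S^3\to\mathrm{Diff}(S^3\sqcup S^3)$, and this datum cannot be discarded. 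Indeed, if it were trivial, a Mayer--Vietoris computation would give $H_4\bigl(f^{-1}(\{\|x\|\le 1+\varepsilon\})\bigr)\cong\mathbb{Z}$, because the attaching spheres $\{\mathrm{pt}\}\times C$ of the fibrewise $1$-handle then bound discs $\{\mathrm{pt}\}\times D^4$ in the inner piece; since $H_4(S^3\times S^3)=0$, this $\mathbb{Z}$ injects into $H_4(M)$ and $M$ is not a homotopy sphere (this is the case $M\cong S^3\times S^4$, Euler number $0$). So the piece is not $S^3\times D^4$ for the reason you give; rather, the homotopy-sphere hypothesis is what forces the clutching degrees to be $\pm1$, and that is precisely the Euler-number condition you invoke later. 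Moreover, even granting that both pieces are $S^3\times D^4$, ``union of two copies of $S^3\times D^4$ along $S^3\times S^3$'' is not yet ``total space of an $S^3$-bundle over $S^4$'': one needs the gluing diffeomorphism to preserve the projection to a common base factor, and your inner/outer decomposition does not exhibit this (in the outer piece the $S^3$ factor is the angular sphere, in the inner piece it is the fibre of $f$). The decomposition that works is by the two sheets of the Reeb space $W_f\simeq S^4$: each sheet over the inner disc carries a trivial $S^3$-bundle, the two sheets are glued along the indefinite fold sphere by the clutching map above, and the outer annulus capped by the definite fold is compatible with this; that exhibits $M$ directly as an $S^3$-bundle over $S^4=D^4\cup_{S^3}D^4$ with the suppressed monodromy as clutching function, after which Hatcher's theorem and the Eells--Kuiper count of $16$ finish the argument as you describe. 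The converse construction and the remaining clauses are fine.
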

In a class which is not the class of special generic maps, the following fact was explicitly found: the differential topological properties of fold maps affect the differentiable structures of the homotopy spheres.
Note also that every $7$-dimensional smooth homotopy sphere $M$ is represented as a connected sum of finitely many homotopy spheres represented as total spaces of linear bundles over $S^4$ whose fiber is diffeomorphic to $S^3$.
\subsection{Reeb spaces.}
For a continuous map $c:X \rightarrow Y$, we can define an equivalence relation ${\sim}_c$ on $X$ by the following rule: $p_1 {\sim}_c p_2$ if and only if $p_1$ and $p_2$ are in a same connected component of a preimage $c^{-1}(q)$ for some
$q \in Y$. We call the quotient space $W_c:=X/{{\sim}_c}$ the {\it Reeb space} of $c$. For a fold map satisfying the two conditions on the restriction to the singular set in subsection \ref{subsec:1.2}, the Reeb space is a $\dim Y$-dimensional polyhedron.
This property holds for smooth maps of wider classes such that the dimensions of the  manifolds of the domains are greater than those of the targets. See \cite{kobayashisaeki} and \cite{shiota} for example. The following example presents a fundamental and important fact in the next subsection.
\begin{Ex}
\label{ex:1}
For a special generic map, the Reeb space is a manifold we can smoothly immerse into the manifold of the target.
\end{Ex}
\subsection{Construction.}
\subsubsection{Surgery operations to manifolds and maps to construct new fold maps.}
\begin{Def}
\label{def:3}
For a fold map $f:M \rightarrow N$, let $P$ be a connected component of $(W_f-q_f(S(f))) \bigcap {\bar{f}}^{-1}(N-f(S(f)))$, regarded as a manifold diffeomorphic to $\bar{f}(P) \subset N$.

Let $l>0$ and $l^{\prime} \geq 0$ be integers. Assume that there exist families $\{S_j\}_{j=1}^{l}$ of finitely many standard spheres and $\{N(S_j)\}_{j=1}^{l}$ of total spaces of linear bundles over these spheres whose fibers are diffeomorphic to unit disks. $S_j$ also denotes the image of the section obtained by choosing the origin for each fiber diffeomorphic to the unit disk for each $N(S_j)$. Assume that the dimensions of $N(S_j)$ are always $n$
and that there exist smooth immersions $c_j:N(S_j) \rightarrow P$ satisfying the following properties.
\begin{enumerate}
\item $f {\mid}_{f^{-1}({\bigcup}_{j=1}^l c_j(N(S_j)))} f^{-1}({\bigcup}_{j=1}^l c_j(N(S_j))) \rightarrow {\bigcup}_{j=1}^l c_j(N(S_j))$ gives a trivial smooth bundle whose fiber is diffeomorphic to $S^{m-n}$.
\item The family $\{{c_j} {\mid}_{\partial N(S_j)}:\partial N(S_j) \rightarrow P\}_{j=1}^l$ is normal.
\item The family $\{{c_j} {\mid}_{S_j}:S_j \rightarrow P\}_{j=1}^l$ is normal and the number of crossings is finite.
\item Let the set of all crossings of the family of the immersions $\{{c_j} {\mid}_{S_j}:S_j \rightarrow P\}_{j=1}^l$ be denoted by $\{p_{j^{\prime}}\}_{j^{\prime}=1}^{l^{\prime}}$. For each $p_{j^{\prime}}$, there exist one or two integers $1 \leq a(j^{\prime}),b(j^{\prime}) \leq l$ and small standard closed disks $D_{2j^{\prime}-1} \subset  S_{a(j^{\prime})}$ and $D_{2j^{\prime}} \subset S_{b(j^{\prime})}$ satisfying the following four properties.
\begin{enumerate}
\item $\dim D_{2j^{\prime}-1}=\dim S_{a(j^{\prime})}$ and $\dim D_{2j^{\prime}}=\dim S_{b(j^{\prime})}$.
\item $p_{j^{\prime}}$ is in the images of the immersions $p_{j^{\prime}} \in c_{a(j^{\prime})}({\rm Int } D_{2j^{\prime}-1})$ and $p_{j^{\prime}} \in c_{b(j^{\prime})}({\rm Int } D_{2j^{\prime}})$.
\item If $a(j^{\prime})=b(j^{\prime})$, then $D_{2j^{\prime}-1}\bigcap D_{2j^{\prime}}$ is empty.
\item If we restrict the bundle $N(S_{a(j^{\prime})})$ over the sphere to $D_{2j^{\prime}-1}$ and the bundle $N(S_{b(j^{\prime})})$ over the sphere to $D_{2j^{\prime}}$, then the images of the total spaces of these resulting bundles by $c_{a(j^{\prime})}$ and $c_{b(j^{\prime})}$ agree as subsets in ${\mathbb{R}}^n$: the restrictions of the immersions to these spaces are embeddings.
\item The set of all crossings of the family $\{{c_j} {\mid}_{\partial N(S_j)}:\partial N(S_j) \rightarrow P\}_{j=1}^l$ is the disjoint union of the $l^{\prime}$ corners of the subsets just before each of which is for $1 \leq j^{\prime} \leq l^{\prime}$. 
\end{enumerate}
\end{enumerate}
In this situation, the family $\{(S_j,N(S_j),c_j:N(S_j) \rightarrow P)\}_{j=1}^{l}$ is said to be a {\it normal system of submanifolds} compatible with $f$. 
\end{Def}
In the situation of Definition \ref{def:3}, let $\{N^{\prime}(S_j) \subset N(S_j)\}_{j=1}^{l}$ be a family of total spaces of subbundles of $\{N(S_j)\}_{j=1}^{l}$ over the manifolds whose fibers are standard closed disks. We assume that the radii are all $0<r<1$. If we take $r$ suitably, then same properties as presented in Definition \ref{def:3} hold: we
can obtain another family $\{(S_j,N^{\prime}(S_j),{c_j} {\mid}_{N^{\prime}(S_j)}:N^{\prime}(S_j) \rightarrow P)\}_{j=1}^l$, regarded as a normal system of submanifolds compatible with $f$. We can identify each fiber, which is a standard closed disk of radius $r$ with a unit disk centered at the origin in the Euclidean space via the diffeomorphism mapping $t$ to $\frac{1}{r} t$.

\begin{Def}
\label{def:4}
The familiy $\{(S_j,N(S_j),c_j:N(S_j) \rightarrow P)\}_{j=1}^{l}$ is said to be a {\it wider normal system supporting} the normal system of submanifolds $\{(S_j,N^{\prime}(S_j),{c_j} {\mid}_{N^{\prime}(S_j)}:N^{\prime}(S_j) \rightarrow P)\}_{j=1}^{l}$ compatible with $f$.
\end{Def}

\begin{Def}
\label{def:5}
For a fold map $f:M \rightarrow N$ and an integer $l>0$, let $P$ be a connected
component of $(W_f-q_f(S(f))) \bigcap {\bar{f}}^{-1}(N-f(S(f)))$ and let $\{(S_j,N(S_j),c_j:N(S_j) \rightarrow P)\}_{j=1}^{l}$ be a normal system of submanifolds compatible with $f$.
Let $\{(S_j,N^{\prime}(S_j),{c_j}^{\prime}:N^{\prime}(S_j) \rightarrow P)\}_{j=1}^{l}$ be also a normal  system of submanifolds compatible with $f$. 
Assume also that $\{(S_j,N(S_j),c_j:N(S_j) \rightarrow P)\}_{j=1}^{l}$ is 
a wider normal system supporting $\{(S_j,N^{\prime}(S_j),{c_j}^{\prime}:N^{\prime}(S_j) \rightarrow P)\}_{j=1}^{l}$ .
Assume that we can construct a stable fold map $f^{\prime}$ on an $m$-dimensional closed manifold $M^{\prime}$ into ${\mathbb{R}}^n$ satisfying the following properties.
\begin{enumerate}
\item $Q:=f^{-1}({\bigcup}_{j=1}^l c_j(N(S_j)))$ is defined and $M-{\rm Int} Q$ is not empty. $M-{\rm Int} Q$ is regarded as an $m$-dimensional compact submanifold of $M^{\prime}$ via a suitable smooth embedding $e:M-{\rm Int} Q \rightarrow M^{\prime}$.
\item $f {\mid}_{M-{\rm Int} Q}={f }^{\prime} \circ e {\mid}_{M-{\rm Int} Q}$ holds.
\item ${f}^{\prime}(S({f}^{\prime}))$ is the disjoint union of $f(S(f))$ and ${\bigcup}_{j=1}^n c_j(\partial N^{\prime}(S_j))$.
\item The indices of points in the preimage of new connected components in the resulting singular value set are all $1$.
\item The preimage of each regular value $p$ sufficiently close to the union ${\bigcup}_{j=1}^l c_j(S_j)$ is a disjoint union of standard spheres.
\end{enumerate}
This enables us to define a procedure of constructing $f^{\prime}$ from $f$ and we call this an {\it ATSS} to $f$. We call the union ${\bigcup}_{j=1}^l c_j(S_j)$ the {\it generating image} of the operation.
\end{Def}
\begin{Prop}
\label{prop:1}
In the situation of Definition \ref{def:3} {\rm (}Definition \ref{def:5}{\rm )}, for the normal system of submanifolds compatible with $f$, we can do an ATSS to $f$ so that the ${\bigcup}_{j=1}^l c_j(S_j)$ is the generating image of the operation
as in Definition \ref{def:5}.
\end{Prop}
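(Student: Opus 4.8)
The plan is to realize the ATSS as an explicit local surgery on the fold map $f$, carried out fiberwise over the images $c_j(N(S_j)) \subset P$, using the triviality hypotheses of Definition \ref{def:3}. First I would set up the model. Over $P$, away from the crossings, condition (1) of Definition \ref{def:3} gives a trivial bundle $f^{-1}\!\big(\bigcup_j c_j(N(S_j))\big) \to \bigcup_j c_j(N(S_j))$ with fiber $S^{m-n}$, so locally the restriction of $f$ looks like the projection $N(S_j) \times S^{m-n} \to N(S_j)$. I would pass to the wider normal system supporting the given normal system (Definition \ref{def:4}), so that I have a slightly larger trivialized region $N(S_j)$ in which to perform the surgery and leave a collar of genuinely fixed behavior near the boundary. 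The core construction is then a standard local model: over a single disk-bundle $N(S_j) = S_j \times D^n$ (trivialized, with the understanding that the bundle may be twisted — one handles the twist by performing the same operation fiberwise over $S_j$ using the structure group), replace the product map $(S_j \times D^n) \times S^{m-n} \to S_j \times D^n$ by a family of elementary fold maps $D^n \times (\text{something}) \to D^n$ whose singular set is $S_j \times \partial N'(S_j)$ and whose fibers over regular values are obtained from $S^{m-n}$ by attaching/removing a handle, i.e. an index-$1$ fold locus creating one extra $S^{m-n}$ component on the appropriate side. This is exactly the kind of elementary "bubbling" move used to build the round fold maps of Theorem \ref{thm:2} and in the author's earlier papers \cite{kitazawa5,kitazawa6,kitazawa8}; I would invoke or reproduce that local model.

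Next I would verify the gluing. Because the new map agrees with $f$ on a collar near $\partial\big(\bigcup_j c_j(N(S_j))\big)$ — this is where the wider normal system is used, to guarantee the surgery dies off before reaching the boundary — the embedding $e : M - \operatorname{Int} Q \hookrightarrow M'$ of Definition \ref{def:5} is obtained tautologically, and conditions (1) and (2) of that definition hold by construction. The new singular value set is $f(S(f)) \sqcup \bigcup_j c_j(\partial N'(S_j))$ by inspection of the local model, giving (3); the index of the newly created fold points is $1$ by the choice of local model, giving (4); and the preimage of a regular value near $\bigcup_j c_j(S_j)$ is a disjoint union of standard spheres — one old $S^{m-n}$ plus the newly bubbled standard sphere(s) — giving (5). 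Stability of $f'$ follows because the only singularities are definite and indefinite folds with the restriction to the singular set an immersion with normal crossings, and near the crossings the picture is the standard multi-germ; I would appeal to the usual characterization of stable maps in these dimensions.

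The main obstacle — and the step deserving real care — is the behavior at the $l'$ crossings of the family $\{c_j|_{S_j}\}$. Away from crossings the surgery is a product, but at a crossing $p_{j'}$ two surgery regions overlap, and one must check that the two fiberwise operations, performed over $D_{2j'-1} \subset S_{a(j')}$ and $D_{2j'} \subset S_{b(j')}$, can be carried out simultaneously and compatibly inside the common image (condition (4)(d) of Definition \ref{def:3} says these images literally coincide as subsets of $\mathbb{R}^n$, and (4)(c) handles the self-crossing case by keeping the disks disjoint in the source). Here I would use that on the overlap the two local models are each products of the elementary fold model with a disk factor in complementary directions, so their "superposition" is again a well-defined fold map whose singular set near $p_{j'}$ is the transverse (normal-crossing) union of the two singular sheets — precisely the normal-crossing condition already imposed on $\{c_j|_{\partial N(S_j)}\}$ in (4)(e). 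Checking that this superposed local model is still a stable fold map with all new indices equal to $1$, and that its singular image is exactly the prescribed union, is the technical heart; everything else is assembling these local pieces with a partition-of-unity-style interpolation in the collars and recording that the resulting map has the advertised regular fibers.
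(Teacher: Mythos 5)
Your proposal follows essentially the same route as the paper: both build the elementary ``bubbling'' local model (a Morse-type fold map on $S^{m-n}\times[-1,1]$ minus a disk, creating one extra sphere component with an index-$1$ fold), spread it fiberwise over the trivialized neighborhoods $c_j(N(S_j))$ using the wider supporting system for the collar, and handle each crossing $p_{j'}$ by taking the product of that elementary model over one disk factor with the identity over the complementary disk factor and gluing the two resulting product maps — exactly the superposition you describe as the technical heart. The paper's proof is this construction made explicit (via the maps $\tilde{f}_{k_2,0}$ and $\tilde{f_{\rm SD}}_{k_1,k_2}$), so no substantive difference.
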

\begin{proof}
We present a local fold map around $p_{j^{\prime}}$ in Definition \ref{def:3}. This is also presented in \cite{kitazawa8} with FIGURE 2 and \cite{kitazawa9}. $S^{0}$ is the two point set with the discrete topology.
Set $k_1,k_2>0$ be integers.
We set ${D^{k_1}}_{\frac{1}{2}}$ as the set of all points $x \in {\mathbb{R}}^{k_1}$ satisfying $||x|| \leq \frac{1}{2}$.
We construct a trivial smooth bundle over ${D^{k_1}}_{\frac{1}{2}}$ whose fiber is diffeomorphic to $S^{k_2}$.
We also set a Morse function ${\tilde{f}}_{k_2,0}$ on a manifold obtained by removing the interior of a ($k_2+1$)-dimensional standard closed disk embedded smoothly in the interior of $S^{k_2} \times [-1,1]$ onto $[\frac{1}{2},\frac{3}{2}] \subset (0,+\infty) \subset \mathbb{R}$ satisfying the following four properties.
\begin{itemize}
\item The preimage of the minimum coincides with the disjoint union of two connected components of the boundary.
\item The preimage of the maximum coincides with one connected component of the boundary.
\item There exists exactly one singular point, and the singular point is in the interior of the manifold.
\item The value at the singular point is $1$.
\end{itemize}
We glue the projection of the trivial bundle over ${D^{k_1}}_{\frac{1}{2}}$ and the map ${\tilde{f}}_{k_2,0} \times {\rm id}_{\partial {D^{k_1}}_{\frac{1}{2}}}:[\frac{1}{2},+\infty) \times \partial {D^{k_1}}_{\frac{1}{2}}$. By gluing these maps suitably, we have a desired smooth map onto $\{x \in {\mathbb{R}}^{k_1} \mid ||x||  \leq \frac{3}{2}\}$.
See also \cite{kitazawa} and \cite{kitazawa4} for this map.
We can decompose the manifold of the domain into two compact manifolds with boundaries and the restriction to each manifold is as follows.
\begin{enumerate}
\item A surjection giving a trivial smooth bundle whose fiber is diffeomorphic to $D^{k_2}$.
\item A surjection such that the regular value set consist of two connected components. The preimage of a regular value in each connected component is diffeomorphic to the following manifolds.
\begin{enumerate}
\item $D^{k_2}$.
\item $D^{k_2} \sqcup S^{k_2}$.
\end{enumerate}
\end{enumerate}

Let the latter map be denoted by ${\tilde{f_{{\rm SD}}}}_{k_1,k_2}$.

We can take a sufficiently small standard closed disk ${D^{\prime}}_{2j^{\prime}-1} \supset D_{2j^{\prime}-1}$ of dimension $\dim D_{2j^{\prime}-1}$ so that the relation $S_{a(j^{\prime})} \supset {D^{\prime}}_{2j^{\prime}-1} \supset {\rm Int} {D^{\prime}}_{2j^{\prime}-1} \supset D_{2j^{\prime}-1}$ holds. Similarly we can take a sufficiently small standard closed disk ${D^{\prime}}_{2j^{\prime}} \supset D_{2j^{\prime}}$ of dimension $\dim D_{2j^{\prime}}$ so that the similar relation holds.
These disks are of course smoothly embedded in $S_{a(j^{\prime})}$ and $S_{b(j^{\prime})}$.
We can construct a product map of the composition of ${\tilde{f_{{\rm SD}}}}_{\dim {D^{\prime}}_{2j^{\prime}-1},m-n}$ with a diffeomorphism onto $c_{a(j^{\prime})}({D^{\prime}}_{2j^{\prime}-1})$ and the identity map ${\rm id}_{c_{b(j^{\prime})}({D^{\prime}}_{2j^{\prime}})}$. We can construct a product map of the composition of ${\tilde{f_{{\rm SD}}}}_{\dim {D^{\prime}}_{2j^{\prime}},m-n}$ with a diffeomorphism onto $c_{b(j^{\prime})}({D^{\prime}}_{2j^{\prime}})$ and the identity map ${\rm id}_{c_{a(j^{\prime})}({D^{\prime}}_{2j^{\prime}-1})}$. We glue these two maps suitably to obtain a local map onto $c_{a(j^{\prime})}({D^{\prime}}_{2j^{\prime}-1}) \times c_{b(j^{\prime})}({D^{\prime}}_{2j^{\prime}})$: the manifold of the target is same as those of the two local maps.

After constructing this local map around each $p_{j^{\prime}}$, we can construct for the remaining part easily. Around new singular values whose preimages consist of exactly one point, we construct maps represented as product maps of ${\tilde{f}}_{m-n,0}$ and identity maps on suitable manifolds for suitable coordinates. Around new regular values, we construct trivial smooth bundles.

This completes the proof.
\end{proof}  

\subsubsection{Homology and cohomology groups of manifolds admitting the fold maps obtained by the operations.}
For a homology class $c$ of a polyhedron $X$, it is {\it represented} by a subpolyhedron $Y$ embedded in $X$ if at least one of the following two hold.
\begin{enumerate}
\item Let $Y$ be a closed and orientable manifold. For a {\it fundamental class}, which is the uniquely defined class containing cycles obtained canonically from $Y$ with an orientation, the value of the homomorphism induced from the embedding into $X$ there is $c$.
\item More generally, let $Y$ be a quotient space of a closed and orientable manifold such that the quotient map is also a PL map. For a fundamental class, which is the uniquely defined class containing cycles obtained canonically from $Y$ with an orientation, the value of the homomorphism induced from the composition of the embedding with the quotient map into $X$ there is $c$.
\end{enumerate}

The following proposition is an extension of an important proposition in \cite{kitazawa7}. We can show this in an almost similar way. 
\begin{Prop}
\label{prop:2}
We consider a situation as explained in Definition \ref{def:3}.
Let $m>n \geq 1$ be integers. Let $n$ be even. Let $M$ be an $m$-dimensional closed and connected manifold. For the normal system of submanifolds compatible with $f$, assume that $\dim S_j=\frac{n}{2}$ for any $j$. We also assume the relations $0<n-\frac{n}{2}=\frac{n}{2}<m-n,n<m-n+\frac{n}{2}=m-\frac{n}{2}<m$ and $m-n \neq n$.

In this situation, we can do an ATSS to $f$ such that the ${\bigcup}_{j=1}^l c_j(S_j)$ is the generating image of the operation and have a new map $f^{\prime}:M^{\prime} \rightarrow {\mathbb{R}}^n$ satisfying the following three properties.

\begin{itemize}
\item $H_{i}(M^{\prime};\mathbb{Z})$ is isomorphic to $H_{i}(M;\mathbb{Z}) \oplus {\mathbb{Z}}^l$ for $i=\frac{n}{2},m-n,n,m-\frac{n}{2}$.
\item $H_{i}(M^{\prime};\mathbb{Z})$ is isomorphic to $H_{i}(M;\mathbb{Z})$ for $i \neq \frac{n}{2},m-n,n,m-\frac{n}{2}$.
\item The total Stiefel-Whitney class of $M^{\prime}$ is $1 \in H^{0}(M^{\prime};\mathbb{Z}/2\mathbb{Z})$ if that of $M$ is so.
\end{itemize}

Furthermore, for the resulting Reeb space, we can construct the map satisfying the following two properties.
\begin{itemize}
\item $H_{i}(W_{f^{\prime}};\mathbb{Z})$ is isomorphic to $H_{i}(W_f;\mathbb{Z}) \oplus {\mathbb{Z}}^l$ for $i=\frac{n}{2},m-n,n,m-\frac{n}{2}$. Under suitable identifications of $H_{i}(M^{\prime};\mathbb{Z})$ with $H_{i}(M;\mathbb{Z}) \oplus {\mathbb{Z}}^l$ and $H_{i}(W_{f^{\prime}};\mathbb{Z})$ with $H_{i}(W_f;\mathbb{Z}) \oplus {\mathbb{Z}}^l$, the homomorphism ${q_{f^{\prime}}}_{\ast}$ between the homology groups induced from the quotient map $q_{f^{\prime}}$ maps an element $(0,p) \in H_{i}(M;\mathbb{Z}) \oplus {\mathbb{Z}}^l$ to $(0,p) \in H_{i}(W_f;\mathbb{Z}) \oplus {\mathbb{Z}}^l$. Under the identifications, ${q_{f^{\prime}}}_{\ast}$ maps an element $(p,0) \in H_{i}(M;\mathbb{Z}) \oplus {\mathbb{Z}}^l$ to $({q_f}_{\ast}(p),0) \in H_{i}(W_f;\mathbb{Z}) \oplus {\mathbb{Z}}^l${\rm :} the homomorphism ${q_f}_{\ast}$ is induced from $q_f:M \rightarrow W_f$ in a canonical way.
\item $H_{i}(W_{f^{\prime}};\mathbb{Z})$ is isomorphic to $H_{i}(W_f;\mathbb{Z})$ for $i \neq \frac{n}{2},m-n,n,m-\frac{n}{2}$.
\end{itemize}
\end{Prop}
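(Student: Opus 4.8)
The plan is to localise everything to the region where the operation takes place and then compare $M$ with $M'$ (respectively $W_f$ with $W_{f'}$) through the long exact sequences of a pair. Write $\hat N:=\bigcup_{j=1}^{l}c_j(N(S_j))\subset P$ for the (plumbed) union of the linear $D^{n/2}$-bundles over the $S^{n/2}$'s, pass to the supporting wider normal system of Definition~\ref{def:4} so that $\hat N':=\bigcup_j c_j(N'(S_j))$ is a disk-subbundle-union with $\hat N-\operatorname{Int}\hat N'\cong\partial\hat N'\times[0,1]$ a collar, and set $Q:=f^{-1}(\hat N)$, $X:=M-\operatorname{Int}Q$, $Q_0:=\hat N'\times S^{m-n}$. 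By Proposition~\ref{prop:1} the desired $f'$ exists, and by Definition~\ref{def:5} it agrees with $f$ on $X$; hence $M=X\cup_{\partial Q}Q$ and $M'=X\cup_{\partial Q}Q'$ with the \emph{same} $X$ (and the same bundle data along $\partial Q$), and likewise $W_f=W_X\cup\hat N$, $W_{f'}=W_X\cup W_{Q'}$ (over $\hat N$ the fibres of $f$ are connected, so the Reeb quotient of $Q$ is $\hat N$). Since the bundle in Definition~\ref{def:3}(1) is trivial, $Q=\hat N\times S^{m-n}$. By excision $H_\ast(M,X)\cong H_\ast(Q,\partial Q)$ and $H_\ast(M',X)\cong H_\ast(Q',\partial Q)$, and the collar retracts these onto $H_\ast(Q_0,\partial Q_0)$ and $H_\ast(R,\partial R)$ respectively, where $R:=(f')^{-1}(\hat N')$ is the ``bubbled'' piece assembled in the proof of Proposition~\ref{prop:1} from the local models $\tilde f_{\mathrm{SD}}$: over an interior regular value of $c_j(N'(S_j))$ the fibre of $f'$ gains a second $S^{m-n}$, the two copies merging along the new index-$1$ fold locus over $\partial\hat N'$. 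So everything reduces to comparing $H_\ast(R,\partial R)$ with $H_\ast(Q_0,\partial Q_0)\cong H_\ast(\hat N',\partial\hat N')\otimes H_\ast(S^{m-n})$.

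The heart of the proof is this local comparison, and I expect it to be the main obstacle. I would show that for each $j$ the operation creates exactly two new classes — the bubbled-off sphere $\sigma_j\cong S^{m-n}$ over an interior regular value of $c_j(S_j)$, and the new zero-section sphere $S_j'\cong S^{n/2}$ of the second sheet created by the fold, both closed in $R$ with trivial normal bundle and disjoint from $\partial R$ — so that, together with their Lefschetz duals,
\[
H_i(R,\partial R)\;\cong\;H_i(Q_0,\partial Q_0)\oplus\mathbb Z^{l}\qquad\text{for } i=n/2,\ m-n,\ n,\ m-n/2,
\]
and $H_i(R,\partial R)\cong H_i(Q_0,\partial Q_0)$ otherwise. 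The hypotheses $0<n/2<m-n$, $n<m-n/2<m$ and $m-n\neq n$ are exactly what keeps these four degrees distinct (in particular $m-n\neq n$ separates the bubble spheres from their $n$-dimensional duals). I would prove this by giving $R$ the Mayer--Vietoris/handle decomposition dictated by $\tilde f_{\mathrm{SD}}$ over a disk, then patching over $S^{n/2}$ — the relevant $S^{m-n}$- and $S^{n/2}$-bundles near the generating image being trivial, by the product structure used in Proposition~\ref{prop:1} and by Definition~\ref{def:5}(5) — and finally over the plumbing; the delicate point is the crossing bookkeeping of Definition~\ref{def:3}(4) (the disks $D_{2j'-1},D_{2j'}$, the corner conditions, the embeddedness in (d)), which is what guarantees that the count stays exactly $l$, the $l'$ crossings contributing nothing net, and where the argument runs ``almost as in'' the corresponding proposition of \cite{kitazawa7} but with one extra sheet present.

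Feeding this into the two pair sequences $\cdots\to H_i(X)\to H_i(M^{(\prime)})\to H_i(M^{(\prime)},X)\xrightarrow{\partial}H_{i-1}(X)\to\cdots$ then finishes the statement for $M'$: the connecting maps factor through the operation-independent map $H_{\ast-1}(\partial Q)\to H_{\ast-1}(X)$, and the four new families lie in the image of $H_\ast(R)\to H_\ast(R,\partial R)$ (their representatives miss $\partial R$), hence die under $\partial$; so $\operatorname{coker}(\partial_{i+1})$ is unchanged while $\ker(\partial_i)$ grows by $\mathbb Z^l$ in the four degrees, and since $\mathbb Z^l$ is free a short diagram chase (or the nondegenerate pairing of the new spheres against their duals, which have trivial normal bundles) yields $H_i(M')\cong H_i(M)\oplus\mathbb Z^l$ in those degrees and $\cong H_i(M)$ elsewhere. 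For the total Stiefel--Whitney class I would use that $TM^{(\prime)}$ restricts on $X$, $Q$, $Q'$ to their own tangent bundles, which are assembled from products $D^{k_1}\times S^{k_2}$ and trivial $S^{m-n}$-bundles over the (plumbed) spheres and so have trivial total class; since $w(M)=1$ forces $w(X)=1$, naturality of $w$ together with Mayer--Vietoris in $\mathbb Z/2\mathbb Z$-coefficients gives that $w(M')$ restricts to $1$ on $X$ and on $Q'$, and pairing a putative nonzero component of $w(M')$ against the new spheres and invoking the Wu formula rules it out, so $w(M')=1$.

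The Reeb-space statements I would obtain by running the same pair-sequence argument for $W_{f'}=W_X\cup W_{Q'}$, where $W_{Q'}$ is the main sheet $\hat N$ with the $l$ new doubled sheets $N'(S_j)\simeq S^{n/2}$ attached along their boundary sphere bundles (distinct new sheets are \emph{not} identified at a crossing, since near a crossing $f'$ has three fibre components), which produces the asserted extra $\mathbb Z^l$-summands in $H_\ast(W_{f'})$; the description of $q_{f'\ast}$ is then immediate — $q_{f'}=q_f$ on $X$, giving the block $q_{f\ast}$ on the old summand; each new zero-section sphere $S_j'\subset M'$ is carried homeomorphically onto the zero section of the corresponding new sheet, and the duals of the bubble spheres onto the fundamental classes of the new sheets rel boundary, giving the identity on the new summand; while the bubble spheres themselves collapse to points — exactly the block form $(p,0)\mapsto(q_{f\ast}(p),0)$, $(0,p)\mapsto(0,p)$ claimed. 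As already said, the one step that I expect to need genuine care is the local computation of $H_\ast(R,\partial R)$ and the bookkeeping at the $l'$ crossings: one must verify not merely the change of Betti numbers but the absence of new torsion, the splitting of the extra $\mathbb Z^l$ as an honest direct summand of $H_\ast(M')$, and that the identifications of Definition~\ref{def:3}(4) neither create nor destroy generators.
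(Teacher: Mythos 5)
Your overall route is legitimate and in fact more self-contained than the paper's, which defers the embedded case to an earlier article, asserts that the crossings do not affect the argument, and simply lists the new generators; your identifications in degrees $m-n$ and $n$ (the bubbled-off fibre sphere and its dual, the double of $N'(S_j)$) agree with the paper's. But the step you yourself single out as the heart of the proof --- the local computation of $H_{\ast}(R,\partial R)$ --- contains a genuine error in degree $\frac{n}{2}$: the ``new zero-section sphere $S_j'$ of the second sheet'' is not a new class. The two sheets over $N'(S_j)$ are joined along the fold over $\partial N'(S_j)$, so at a fixed point of the fibre $S^{m-n}$ the two copies of the disk bundle $N'(S_j)$ close up inside $R$ into the double of $N'(S_j)$, an $S^{n/2}$-bundle over $S_j$ containing both zero sections. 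In the simplest admissible case (trivial $N(S_j)$, no crossings --- Definition \ref{def:3} allows $l'=0$, and Proposition \ref{prop:2} imposes no crossing hypothesis) this double is $S_j\times S^{n/2}$ and the two sections are the parallel copies $S_j\times\{p_{\pm}\}$, hence homologous in $R$ and in $M'$; a direct Mayer--Vietoris computation of $H_{n/2}(R)$ confirms that the old and new section classes have the same image. So $[S_j']$ lies in the image of $H_{n/2}(M)$ (in general it differs from $[S_j]$ only by a combination of the genuinely new classes with coefficients governed by the crossing data --- this is exactly the content of Proposition \ref{prop:3}, and since the matrix there is symmetric with zero diagonal it is never invertible for $l=1$), and the family $\{[S_j']\}$ cannot span the extra ${\mathbb{Z}}^l$.

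The correct new generator in degree $\frac{n}{2}$ is transverse to $S_j$, not parallel to it: it is the sphere obtained from two parallel copies of a single fibre disk of $N'(S_j)\rightarrow S_j$, one in each sheet, joined by a tube through the pair-of-pants fibres over the fold --- the class the paper denotes by $E_S\times\{{\ast}_{E_S}'\}\subset E_D$ --- and the corresponding degree-$(m-\frac{n}{2})$ generator is $S_j\times S^{m-n}$, not the Lefschetz dual of your $S_j'$. The error propagates: with your generators the claimed splitting $H_{n/2}(R,\partial R)\cong H_{n/2}(Q_0,\partial Q_0)\oplus{\mathbb{Z}}^l$ fails in the crossing-free case, and your description of ${q_{f'}}_{\ast}$ on the new degree-$\frac{n}{2}$ summand (new sections mapping to the zero sections of the new sheets of $W_{f'}$) actually describes the images of the old classes plus crossing corrections, not the identity block $(0,p)\mapsto(0,p)$ asserted in the statement. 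The rest of your scheme (excision, the two pair sequences, the freeness argument for splitting, the Stiefel--Whitney and Reeb-space parts) would go through once the generators are corrected.
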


We present a short sketch of a proof explaining about homology and cohomology classes essential in the discussions later.
For a commutative group $G$ and an element $g$ which is of infinite order, which we cannot represent as $kg^{\prime}$ for an integer satisfying $|k|>1$ and $g^{\prime} \in G$ and which yields an internal direct sum decomposition $<g> \oplus G^{\prime}$ of $G \supset G^{\prime}$ where $<g>$ is the group generated by the set $\{g\}$, we can define the {\it dual} ${\rm D}(g)$ of $g$ as a homomorphism into $\mathbb{Z}$ satisfying ${\rm D}(g)(g)=1$ and for any $G^{\prime}$ satisfying the same condition ${\rm D}(g)(G^{\prime})=\{0\}$. 
Moreover, if the group is a homology group, then we can regard the dual as a cohomology class in a canonical way and we regard the dual as the cohomology class obtained in this way. 


\begin{proof}[A short sketch of a proof]
The proof for the case where the immersion obtained as the restriction to the singular set is an embedding is already done in \cite{kitazawa7}. The construction and existence of self-intersections of the immersion do not influence on the original argument considerably.

In Definition \ref{def:5}, consider a fiber $D_{{\rm F},j}$ of the bundle $N(S_j)$ over $S_j$ such that $c_j(D_{{\rm F},j})$ contains no crossings of the family $\{c_j {\mid}_{\partial N(S_j)}\}$ of the immersions,
set $Q_{{\rm F},j}:=f^{-1}(c_j(D_{{\rm F},j}))$ and consider the restriction of $f$ to $Q_{{\rm F},j}$. 
Note that $Q_{{\rm F},j}$ is a compact manifold diffeomorphic to a manifold obtained in the following way. See also \cite{kitazawa} and \cite{kitazawa4} for this.
\begin{enumerate}
\item $E_0:=S^n \times S^{m-n}$.
\item Choose an equator $E_S$ of $S^n$, diffeomorphic to $S^{n-1}$.
\item We consider a small closed tubular neighborhood of $E_S \times \{{\ast}_{E_S}\} \subset E_0$ and remove its interior where ${{\ast}_{E_S}}$ is a suitable point.
\item The resulting compact manifold $E_D$ is the desired manifold. We can construct and need to construct two relations $E_S \times \{{{\ast}_{E_S}}^{\prime}\} \subset E_D \subset E_0$ and $\{{{\ast}_{E_S}}^{\prime \prime}\} \times S^{m-n} \subset E_D \subset E_0$ hold where ${{\ast}_{E_S}}^{\prime}$ and ${{\ast}_{E_S}}^{\prime \prime}$ are suitable points. We also need to construct that $\{{{\ast}_{E_S}}^{\prime \prime}\} \times S^{m-n}$ is a connected component of the preimage of some regular value. 
\end{enumerate}

We can construct a desired map $f^{\prime}:M^{\prime} \rightarrow {\mathbb{R}}^n$ on a suitable manifold $M^{\prime}$. We explain a generator of the $j$-th summand of ${\mathbb{Z}}^l$ in $H_{i}(M;\mathbb{Z}) \oplus {\mathbb{Z}}^l$, isomorphic to $H_{i}(M^{\prime};\mathbb{Z})$, for $i=\frac{n}{2},n,m-n,m-\frac{n}{2}$.

For $i=\frac{n}{2}$, it is represented by $E_S \times \{{{\ast}_{E_S}}^{\prime}\}$ before for each $j$. For $i=n$, it is represented by the product of the image of an embedding of $S_j$ into $f^{-1}(c_j(S_j))$ such that the composition with $f$ and the immersion $c_j$ agree and $E_S \times \{{{\ast}_{E_S}}^{\prime}\}$ before for each $j$: we can take an embedding of $S_j$ by the structure of the manifold and the map. For $i=m-n$, it is represented by a connected component of the preimage of a regular value, diffeomorphic to $S^{m-n}$, before, for each $j$. For $i=m-\frac{n}{2}$, it is represented by the product of the image of an embedding of $S_j$ into $f^{-1}(c(S_j))$ such that the composition with $f$ and the immersion $c_j$ agree and a connected component of a preimage, diffeomorphic to $S^{m-n}$, before for each $j$: we can take an embedding of $S_j$ by the structure of the manifold and the map.

We can see that the remaining properties hold by the way of this construction.
\end{proof}

\begin{Rem}
\label{rem:1}
On Proposition \ref{prop:2}, originally, we consider a situation as explained in Definition \ref{def:3} as the following.
Let $m>n \geq 1$ be integers. Let $M$ be an $m$-dimensional closed and connected manifold. The normal system of submanifolds compatible with $f$ satisfies the following three.
\begin{enumerate}
\item The immersions of standard spheres are embeddings and that the family of the immersions has no crossings.
\item $\dim S_j=k \leq \frac{n}{2}$ for any $j$.
\item The relations $0<k<m-n,n<m-n+k<m$ and $m-n \neq n$ hold.
\end{enumerate}
In this situation, we have an essentially similar result. This will be used in the proof of the main theorem or Theorem \ref{thm:5}.
\end{Rem}
\subsubsection{The main theorem.}
The following proposition plays essential roles in the proof of the main theorem.
\begin{Prop}
\label{prop:3}
We consider a situation as explained in Proposition \ref{prop:2}.
Let $m>n \geq 1$ be integers. Assume that $n$ is divisible by $4$. 
For the normal system of submanifolds compatible with $f$, assume that $\dim S_j=\frac{n}{2}$ for any $j$ as in Proposition \ref{prop:2}. We also assume the relation on the dimensions $0<n-\frac{n}{2}<m-n<n<m-n+\frac{n}{2}<m$ as in the proposition. Furthermore, let $H:=(h_{j_1,j_2})$ be an $l \times l$ symmetric matrix values of whose components are integers satisfying $h_{j,j}=0$ for $1 \leq j \leq l$ where $h_{j_1,j_2}$ denotes the $(j_1,j_2)$-th element.  
We also assume the following two.
\begin{enumerate}
\item For any pair $(c_{j_1} {\mid}_{S_{j_1}},c_{j_2} {\mid}_{S_{j_2}})$ of the immersions, there exist sufficiently many crossings. 
\item Identifications of homology groups used in Proposition \ref{prop:2} are given.
\end{enumerate}
For $H_{\frac{n}{2}}(M;\mathbb{Z}) \oplus {\mathbb{Z}}^l$, let $e_j$ be the element represented as $(0,{e_{0,j}}) \in H_{\frac{n}{2}}(M;\mathbb{Z}) \oplus {\mathbb{Z}}^l${\rm :} $e_{0,j}$ is the sequence of $l$ integers the value of whose $j$-th component is $1$ and those of whose remaining components are all $0$.
For $H_{n}(M;\mathbb{Z}) \oplus {\mathbb{Z}}^l$, let ${e_j}^{\prime}$ be the element represented as $(0,{e_{0,j}}^{\prime}) \in H_{n}(M;\mathbb{Z}) \oplus {\mathbb{Z}}^l$: ${e_{0,j}}^{\prime}$ is the sequence of $l$ integers whose $j$-th component is $1$ and the other components are $0$.

In this situation, we can do an ATSS to $f$ such that the ${\bigcup}_{j=1}^l c_j(S_j)$ is the generating image of the operation and have a new map $f^{\prime}:M^{\prime} \rightarrow {\mathbb{R}}^n$ satisfying the same properties as ones in Proposition \ref{prop:2} and the following additional properties where ${q_{f^{\prime}}}^{\ast}:H^i(W_{f^{\prime}};\mathbb{Z}) \rightarrow H^i(M^{\prime};\mathbb{Z})$ is the homomorphism induced from the quotient map
 $q_{f^{\prime}}:M^{\prime} \rightarrow W_{f^{\prime}}$ for each $i$. Note that for $e_j$ and ${e_j}^{\prime}$, we can define duals ${\rm D}(e_j)$ and ${\rm D}({e_j}^{\prime})$, respectively. Note also that by virtue of Proposition \ref{prop:2}, we can define the duals ${\rm D}({q_{f^{\prime}}}_{\ast}(e_j))$ and ${\rm D}({q_{f^{\prime}}}_{\ast}({e_j}^{\prime}))$.
\begin{itemize}
\item Two relations ${q_{f^{\prime}}}^{\ast}({\rm D}({q_{f^{\prime}}}_{\ast}(e_j)))={\rm D}(e_j)$ and ${q_{f^{\prime}}}^{\ast}({\rm D}({q_{f^{\prime}}}_{\ast}({e_j}^{\prime})))={\rm D}({e_j}^{\prime})$ hold.
\item The product of ${\rm D}(e_{j_1})$ and ${\rm D}(e_{j_2})$ is $h_{j_1,j_2}{\rm D}({e_{j_1}}^{\prime})+h_{j_2,j_1}{\rm D}({e_{j_2}}^{\prime})$. 
\end{itemize}
\end{Prop}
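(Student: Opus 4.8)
We indicate how we would prove Proposition \ref{prop:3}, building on the \emph{ATSS} of Proposition \ref{prop:1} and on the proof of Proposition \ref{prop:2}.

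\emph{Preparing the immersions and running the construction.} Before performing any surgery we first exploit hypothesis (1) to fix the intersection pattern of the family $\{c_j {\mid}_{S_j}\}_{j=1}^{l}$. Since $\dim S_{j_1}+\dim S_{j_2}=\frac n2+\frac n2=n=\dim P$, for each pair $(j_1,j_2)$ with $j_1\neq j_2$ the immersions $c_{j_1} {\mid}_{S_{j_1}}$ and $c_{j_2} {\mid}_{S_{j_2}}$ have a well-defined algebraic intersection number inside $P$, and since ``sufficiently many crossings'' are available we may cancel excess crossings in pairs by Whitney-type moves carried out inside $P$, away from the remaining data of Definition \ref{def:3}, so that the signed number of crossings of $c_{j_1} {\mid}_{S_{j_1}}$ with $c_{j_2} {\mid}_{S_{j_2}}$ becomes exactly $h_{j_1,j_2}$, keeping the family normal and compatible with $f$. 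Self-crossings of a single $c_j {\mid}_{S_j}$ need no prescription: they will enter only through $2 h_{j,j}=0$ below, and since $4 \mid n$ the degree $\frac n2$ is even, so the product formula itself dictates the value of ${\rm D}(e_j)^2$. Now running the construction of Proposition \ref{prop:2} produces $f':M'\to{\mathbb{R}}^n$ together with the homology splitting, the triviality of the total Stiefel--Whitney class, and the generators $e_j\in H_{\frac n2}(M';{\mathbb{Z}})$ and ${e_j}^{\prime}\in H_n(M';{\mathbb{Z}})$. In particular $w_1(M')=0$, so $M'$ is a closed orientable $m$-manifold and Poincar\'e duality over ${\mathbb{Z}}$ applies.

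\emph{The relations with the Reeb space.} For the first bullet it is enough to evaluate both sides on homology. Given $c\in H_{\frac n2}(M';{\mathbb{Z}})$, write $c=(c_{\mathrm{o}},c_{\mathrm{n}})\in H_{\frac n2}(M;{\mathbb{Z}})\oplus{\mathbb{Z}}^l$ under the identification of Proposition \ref{prop:2}; by that proposition ${q_{f'}}_{\ast}(c)=({q_f}_{\ast}(c_{\mathrm{o}}),c_{\mathrm{n}})$, while ${\rm D}({q_{f'}}_{\ast}(e_j))$ kills the summand $H_{\frac n2}(W_f;{\mathbb{Z}})$ and all new generators except the $j$-th, on which it is $1$. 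Hence $\langle {q_{f'}}^{\ast}({\rm D}({q_{f'}}_{\ast}(e_j))),c\rangle=\langle {\rm D}({q_{f'}}_{\ast}(e_j)),{q_{f'}}_{\ast}(c)\rangle$ equals the $j$-th coordinate of $c_{\mathrm{n}}$, that is $\langle {\rm D}(e_j),c\rangle$; as $c$ was arbitrary the two classes coincide as duals. The identical computation in degree $n$ gives ${q_{f'}}^{\ast}({\rm D}({q_{f'}}_{\ast}({e_j}^{\prime})))={\rm D}({e_j}^{\prime})$.

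\emph{The product formula.} Pass to intersection theory on $M'$ by Poincar\'e duality: ${\rm D}(e_{j_1})\cup{\rm D}(e_{j_2})$ is the Poincar\'e dual of the intersection of the homology classes $\mathrm{PD}({\rm D}(e_{j_1}))$ and $\mathrm{PD}({\rm D}(e_{j_2}))$, both of degree $m-\frac n2$, and the intersection lies in degree $2(m-\frac n2)-m=m-n$. By the description of the new generators in the proof of Proposition \ref{prop:2}, $\mathrm{PD}({\rm D}(e_j))$ is, up to a sign, the $j$-th new generator of $H_{m-\frac n2}(M';{\mathbb{Z}})$, represented by a submanifold $V_j$ on which $f'$ restricts to a smooth $(m-n)$-sphere bundle whose projection factors through an embedding of $S_j$ into $P$ isotopic to $c_j {\mid}_{S_j}$; its intersection number with $e_j$ is $\pm1$. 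After a small perturbation making $V_{j_1}$ and $V_{j_2}$ transverse, their images in $P$ are transverse $\frac n2$-submanifolds meeting exactly in the crossings of $c_{j_1} {\mid}_{S_{j_1}}$ and $c_{j_2} {\mid}_{S_{j_2}}$, and over each crossing $x$ the tangent-space computation shows that the two $(m-\frac n2)$-manifolds meet cleanly in a single $(m-n)$-sphere $F_x$, a component of the regular fibre ${f'}^{-1}(x)$; thus $V_{j_1}\cap V_{j_2}=\bigsqcup_x F_x$ over those crossings. It remains to identify $[F_x]\in H_{m-n}(M';{\mathbb{Z}})$, and here we invoke the local model near a crossing from the proof of Proposition \ref{prop:1} (the gluing of the maps ${\tilde f_{\mathrm{SD}}}$; see also \cite{kitazawa8}): at a crossing of the sheets $S_{j_1}$ and $S_{j_2}$ the $(m-n)$-sphere there is the connected sum of the vanishing $(m-n)$-spheres of the two sheets, so $[F_x]=\mathrm{PD}({\rm D}({e_{j_1}}^{\prime}))+\mathrm{PD}({\rm D}({e_{j_2}}^{\prime}))$ with overall sign the sign of the crossing. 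Summing over the $h_{j_1,j_2}$ signed crossings and dualizing back to $M'$ gives ${\rm D}(e_{j_1})\cup{\rm D}(e_{j_2})=h_{j_1,j_2}{\rm D}({e_{j_1}}^{\prime})+h_{j_1,j_2}{\rm D}({e_{j_2}}^{\prime})=h_{j_1,j_2}{\rm D}({e_{j_1}}^{\prime})+h_{j_2,j_1}{\rm D}({e_{j_2}}^{\prime})$ by the symmetry of $H$; for $j_1=j_2$ this reads $2 h_{j,j}{\rm D}({e_j}^{\prime})=0={\rm D}(e_j)^2$, consistently (the left-hand side is symmetric in $j_1,j_2$ since $4\mid n$ puts these classes in even degree).

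\emph{Expected main obstacle.} The delicate step is the identification of $[F_x]$ together with its sign: one must unwind the explicit local gluing of the $\tilde f_{\mathrm{SD}}$-type maps around a crossing $p_{j'}$ and track orientations through it, to be certain that the $(m-n)$-sphere over a crossing is homologous to the \emph{sum} of the two vanishing spheres of the crossing sheets, and not to a single one, to their difference, or to twice a generator, and that the algebraic crossing count contributes $h_{j_1,j_2}$ with no spurious factor. A secondary, essentially routine, point is to check that the Whitney-type cancellations used to prescribe the intersection numbers preserve all the conditions of Definition \ref{def:3} and the normality of the family, so that Proposition \ref{prop:2} continues to apply verbatim.
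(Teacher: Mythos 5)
Your route is genuinely different from the paper's: you work by intersection theory directly in $M'$, Poincar\'e-dualizing ${\rm D}(e_j)$ to an $(m-\frac n2)$-dimensional sphere bundle $V_j$ over $S_j$ and reading the cup product off $V_{j_1}\cap V_{j_2}$, whereas the paper computes in the Reeb space: $W_{f^{\prime}}$ is described up to simple homotopy equivalence as $W_f$ with copies of $S^{\frac n2}\times S^{\frac n2}$ attached along the factors $S_{B,j}\times\{\ast_j\}$, the class of $S_{B,j}\times\{\ast_j\}$ is arranged to be $({q_f}_{\ast}(c_{M,j}),\sum_{j^{\prime\prime}}h_{j,j^{\prime\prime}}{q_{f^{\prime}}}_{\ast}(e_{j^{\prime\prime}}))$, and the product formula then falls out of the ring of $S^{\frac n2}\times S^{\frac n2}$ after pulling back by ${q_{f^{\prime}}}^{\ast}$ (which is exactly what the first bullet is for). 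Your argument for the first bullet (evaluating both sides against the homology splitting of Proposition \ref{prop:2}) is fine modulo the usual $\mathrm{Ext}$ ambiguity in passing from $\mathrm{Hom}(H_{\ast},\mathbb{Z})$ to cohomology.

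There are, however, two genuine gaps in your treatment of the product formula. First, your opening step --- cancelling excess crossings by ``Whitney-type moves'' so that the signed crossing number of $c_{j_1}{\mid}_{S_{j_1}}$ and $c_{j_2}{\mid}_{S_{j_2}}$ becomes $h_{j_1,j_2}$ --- is not available in the case that actually matters: for Theorem \ref{thm:5} one has $n=4$ and $\dim S_j=2$, where the Whitney trick fails; moreover it replaces the given immersions, so the generating image is no longer the prescribed ${\bigcup}_{j=1}^l c_j(S_j)$. More importantly, this step reveals a conceptual mismatch with the statement: in the paper the coefficients $h_{j_1,j_2}$ are \emph{not} the algebraic intersection numbers of the given $c_j{\mid}_{S_j}$; they are realized by the choices made in the ATSS at the crossings (which products of disks in the attached copies of $S^{\frac n2}\times S^{\frac n2}$ are identified, and how). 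That is precisely why the hypothesis is only that ``sufficiently many crossings'' exist --- a single fixed normal system must realize every admissible $H$. If the cup product were forced to equal the signed crossing count of the given immersions, the proposition's flexibility would be false. Second, the identification $[F_x]=\mathrm{PD}({\rm D}({e_{j_1}}^{\prime}))+\mathrm{PD}({\rm D}({e_{j_2}}^{\prime}))$ at each crossing, with the correct sign and no extra factor, is exactly the content-bearing step, and you leave it as an asserted ``expected obstacle''; the paper avoids this fiberwise analysis entirely by locating the computation in the attached $S^{\frac n2}\times S^{\frac n2}$'s of the Reeb space, where the product of the duals of the two factor spheres is visibly the top class mapping to ${\rm D}({e_j}^{\prime})$. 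As written, your argument establishes at most a special case (the cup product determined by the intersection numbers you can geometrically arrange) and not the stated realization of an arbitrary symmetric $H$ with vanishing diagonal.
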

\begin{proof}
Main ingredients of the proof are essentially in the proofs of important propositions including main propositions in \cite{kitazawa8} and \cite{kitazawa9}. We will concentrate on expositions of several essential parts.

We can see the properties in Proposition \ref{prop:2} and the first property additionally presented here by the way of the construction together with the observation of the topologies of the original Reeb space $W_f$ and the resulting one $W_{f^{\prime}}$, we will explain here. 

$W_{f^{\prime}}$ is simple homotopy equivalent to the polyhedron obtained by attaching a polyhedron to a subpolyhedron ${\bar{f}}^{-1}({\bigcup}_{j=1}^l c_j(S_j)) \subset W_f$. The dimension of the subpolyhedron is $\frac{n}{2}$.

We explain about the polyhedron. We prepare $l$ copies of a manifold diffeomorphic to $S^{\frac{n}{2}} \times S^{\frac{n}{2}}$. Each copy is denoted by $S_{B,j} \times S_{F,j}$ for $1 \leq j \leq l$ where $S_{B,j}$ and $S_{F,j}$ are diffeomorphic to $S^{\frac{n}{2}}$. 

We choose pairs $((D_{B,2j^{\prime}-1} \times D_{F,2j^{\prime}-1} \subset S_{B,p(2j^{\prime}-1)} \times S_{F,p(2j^{\prime}-1)}),(D_{B,2j^{\prime}} \times D_{F,2j^{\prime}} \subset S_{B,p(2j^{\prime})} \times S_{F,p(2j^{\prime})}))$ of products of standard closed disks whose dimensions are $\frac{n}{2}$ where $p$ is a function mapping the integer to an integer $1 \leq i \leq l$: $j^{\prime}$ is an arbitrary positive integer smaller than a sufficiently large integer. We also need to choose the products of disks disjointly. We identify the two products of disks for each pair. We can do an ATSS so that the following properties hold.
\begin{itemize}
\item The resulting Reeb space is simple homotopy equivalent to one obtained by gluing $S_{B,j} \times S_{F,j}$, represented via notation respecting the original manifolds diffeomorphic to $S^{\frac{n}{2}} \times S^{\frac{n}{2}}$, by attaching ${\bigcup}_{j} S_{B,j} \times \{{\ast}_{j}\} (\subset S_{B,j} \times S_{F,j})$, to the subpolyhedron ${\bar{f}}^{-1}({\bigcup}_{j=1}^l c_j(S_j)) \subset W_f$.  
\item ${q_{f^{\prime}}}_{\ast}(e_j)$ is represented by $\{{{\ast}^{\prime}}_{j}\} \times S_{F,j} \subset S_{B,j} \times S_{F,j}$, after the manifold is attached to $W_f$, in the resulting Reeb space $W_{f^{\prime}}$.
\item $S_{B,j} \times \{{\ast}_j\}$ is, after attached to $W_f$, in the resulting Reeb space $W_{f^{\prime}}$, a subpolyhedron by which the class $({q_f}_{\ast}(c_{M,j}),{\Sigma}_{j^{\prime \prime}=1}^{l} h_{j,j^{\prime \prime}} {q_{f^{\prime}}}_{\ast}(e_{j^{\prime \prime}})) \in H_{\frac{n}{2}}(W_f;\mathbb{Z}) \oplus {\mathbb{Z}}^l$ is represented where $c_{M,j}$ is a suitable class.
\end{itemize}
The main ingredients are the assumption that for any pair $(c_{j_1} {\mid}_{S_{j_1}},c_{j_2} {\mid}_{S_{j_2}})$ of the immersions of standard spheres, there exist sufficiently many crossings and the arguments on several homology classes in the proofs of Proposition 3 of \cite{kitazawa8} and Proposition 2 of \cite{kitazawa9} with Proposition \ref{prop:1} of the present paper.

This yields the fact that the second and third additional properties hold. More precise expositions on this with the argument on the explicit ATSS just before will be also seen in the proofs of important propositions and theorems in \cite{kitazawa8} and \cite{kitazawa9}.
\end{proof}

The following special generic maps also play important roles in the construction.

\begin{Ex}
\label{ex:2}
Let $m>n \geq 2$ and $l>0$ be integers. An $m$-dimensional closed and connected manifold $M$ represented as a connected sum of $l$ manifolds each of which is diffeomorphic to $S^{l_j} \times S^{m-l_j}$ for each integer $1 \leq l_j \leq n-1$ admits a special generic map $f:M \rightarrow {\mathbb{R}}^n$ such that the following properties hold.
\begin{enumerate}
\item $W_f$ is represented as a boundary connected sum of $l$ manifolds each of which is diffeomorphic to $S^{l_j} \times D^{n-l_j}$ for each integer $1 \leq l_j \leq n-1$.
\item $f {\mid}_{S(f)}$ is an embedding.
\item We can obtain a trivial linear bundle over $\partial W_f$ whose fiber is diffeomorphic to $D^{m-n+1}$ by restricting $f$ to the preimage of a small collar neighborhood and considering the composition of this restriction with the canonical projection to
$\partial W_f$.
\item We can obtain a trivial smooth bundle over the complement of the interior of the collar neighborhood before in $W_f$ whose fiber is diffeomorphic to $S^{m-n}$ by considering the restriction of $f$ to the preimage of this interior of the collar neighborhood.
\item The homomorphism ${q_f}_{\ast}$ between the homology groups maps the class represented by $S^{l_j} \times \{{\ast}_1\} \subset S^{l_j} \times S^{m-l_j}$ in the connected sum to the
class represented by $S^{l_j} \times \{{\ast}_2\} \subset S^{l_j} \times {\rm Int} D^{n-l_j} \subset S^{l_j} \times D^{n-l_j}$ in the boundary connected sum $W_f$.
\end{enumerate}
\end{Ex}

The following theorem is the main theorem.

\begin{Thm}
\label{thm:5}
Let $A$, $B$ and $B^{\prime}$ be free finitely generated commutative groups of ranks $a$, $b$ and $b^{\prime}$, respectively. Let $\{a_j\}_{j=1}^{a}$, $\{b_j\}_{j=1}^{b}$ and $\{{b_j}^{\prime}\}_{j=1}^{b^{\prime}}$ be bases of $A$, $B$ and $B^{\prime}$, respectively.
For each integer $1 \leq i \leq b$, let $\{a_{i,j}\}_{j=1}^{a}$ be a sequence of integers. Let $H:=(h_{j_1,j_2})$ be a $b \times b$ symmetric matrix whose components are integers satisfying $h_{j,j}=0$ for $1 \leq j \leq b$ where $h_{j_1,j_2}$ denotes the values of the
$(j_1,j_2)$-th component.

In this situation, there exist a $7$-dimensional closed and simply-connected spin manifold $M$ and a fold map $f:M \rightarrow {\mathbb{R}^4}$ satisfying the following properties.
\begin{enumerate}
\item $H_2(M;\mathbb{Z})$ is isomorphic to $A \oplus B$ and $H_4(M;\mathbb{Z})$ is isomorphic to $B \oplus B^{\prime}$. $H_3(M;\mathbb{Z})$ is free.
\item $H^j(M;\mathbb{Z})$ is isomorphic to $H_j(M;\mathbb{Z})$. Via suitable isomorphisms ${\phi}_2:A \oplus B \rightarrow H_2(M;\mathbb{Z})$ and ${\phi}_4:B \oplus B^{\prime} \rightarrow H_4(M;\mathbb{Z})$, we can identify $H_2(M;\mathbb{Z})$ with
$A \oplus B$ and $H_4(M;\mathbb{Z})$ with $B \oplus B^{\prime}$. Furthermore, we can define the duals ${a_j}^{\ast}$, ${b_{j,2}}^{\ast}$, ${b_{j,4}}^{\ast}$ and ${{b_j}^{\prime}}^{\ast}$ of ${\phi}_2((a_j,0))$, ${\phi}_2((0,b_j))$, ${\phi}_4((b_j,0))$ and ${\phi}_4((0,{b_j}^{\prime}))$, respectively, and we can canonically obtain bases of $H^2(M;\mathbb{Z})$ and $H^4(M;\mathbb{Z})$. Let the composition of ${\phi}_4$ with an isomorphism mapping the elements of the basis to their Poincar\'e duals
be denoted by ${\phi}_{4,3}:B \oplus B^{\prime} \rightarrow H^3(M;\mathbb{Z})$. Let the composition of ${\phi}_2$ with an isomorphism mapping the elements of the basis to their Poincar\'e duals be denoted by
${\phi}_{2,5}:A \oplus B \rightarrow H^5(M;\mathbb{Z})$.

For the products in $H^i(M;\mathbb{Z})$ for $i=4,5,7$, properties presented in Theorem \ref{thm:1} {\rm (}\ref{thm:1.2}{\rm )} hold except the following cases. Note also that "$C$" in Theorem \ref{thm:1} is replaced by $B^{\prime}$ and "$c_j$" in Theorem \ref{thm:1} is replaced by ${b_j}^{\prime}$ for example.
\begin{enumerate}
\item The first list of {\rm (}\ref{thm:1.2.1}{\rm )} must be replaced. Instead, the product of ${a_{j_1}}^{\ast}$ and ${a_{j_2}}^{\ast}$ vanishes and that of ${b_{j_1,2}}^{\ast}$ and ${b_{j_2,2}}^{\ast}$ is $h_{j_1,j_2} {b_{j_1,4}}^{\ast}+h_{j_2,j_1} {b_{j_2,4}}^{\ast}$ for any pair $(j_1,j_2) \in {{\mathbb{N}}_{\leq a}}^2$ and $(j_1,j_2) \in {{\mathbb{N}}_{\leq b}}^2$ respectively.
\item The fifth list of {\rm (}\ref{thm:1.2.2}{\rm )} must be replaced. Instead, the product of ${b_{i,2}}^{\ast}$ and ${\phi}_{4,3}((b_{i},0))$ is ${\Sigma}_{j=1}^{a} a_{i,j}{\phi}_{2,5}((a_j,0))+{\Sigma}_{j=1}^{b} h_{i,j}{\phi}_{2,5}((0,b_j)) \in H^5(M;\mathbb{Z})$ for any number $i \in {\mathbb{N}}_{\leq b}$.
\end{enumerate}
\item The 3rd and the 5th Stiefel-Whitney classes of $M$ vanish.
\item Let $d$ be the isomorphism $d:H_4(M;\mathbb{Z}) \rightarrow H^4(M;\mathbb{Z})$ defined by corresponding the duals. The first Pontryagin class of $M$ is regarded as $4d \circ {\phi}_4(p) \in H^4(M;\mathbb{Z})$. The 4th Stiefel-Whitney class of $M$ vanishes.
\item The immersion $f {\mid}_{S(f)}$ is normal.
\item The index of each singular point is $0$ or $1$.
\item Preimages of regular values in the image of $f$ are diffeomorphic to $S^3$, $S^3 \sqcup S^3$ or $S^3 \sqcup S^3 \sqcup S^3$.
\end{enumerate}
\end{Thm}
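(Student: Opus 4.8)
The plan is to run the construction behind Theorem \ref{thm:1} (from \cite{kitazawa7}) with the symmetric matrix $H$ built in. Recall that the manifold and fold map of Theorem \ref{thm:1} are obtained by performing a single ATSS, in the situation of Remark \ref{rem:1} (embedded $\tfrac n2$-spheres, no crossings), on a special generic map $f_0\colon M_0\rightarrow {\mathbb{R}}^4$ of the kind provided by Example \ref{ex:2}: the sequences $\{a_{i,j}\}$ are realized through the way the new embedded $2$-spheres are linked with the $2$-handles carrying a basis of $A$, the first Pontryagin class is prepared already in the base $M_0$, and $M_0$ (hence the output) is closed, simply-connected, spin and has trivial total Stiefel--Whitney class. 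Here I would carry out the same construction, but with the single ATSS step performed in the situation of Proposition \ref{prop:3} rather than of Remark \ref{rem:1}: the $l=b$ new immersed $2$-spheres $S_1,\dots,S_b$ are now permitted to have mutual crossings, the signed number of crossings of $c_{j_1} {\mid}_{S_{j_1}}$ with $c_{j_2} {\mid}_{S_{j_2}}$ for $j_1\neq j_2$ is prescribed to be $h_{j_1,j_2}$, each $S_j$ individually remains embedded (which is exactly what the assumption $h_{j,j}=0$ buys), and each $S_j$ is still linked with the $A$-handles with the multiplicities $a_{j,1},\dots,a_{j,a}$.

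The dimensional hypotheses of Proposition \ref{prop:3} hold, since $m=7$, $n=4$ is divisible by $4$, $\dim S_j=2=\frac{n}{2}$ for all $j$, $0<\frac{n}{2}<m-n<n<m-\frac{n}{2}<m$ (that is, $0<2<3<4<5<7$), and $m-n=3\neq 4=n$. Propositions \ref{prop:1} and \ref{prop:3} then produce a fold map $f\colon M\rightarrow {\mathbb{R}}^4$ on a closed $7$-manifold $M$ with $H_i(M;\mathbb{Z})\cong H_i(M_0;\mathbb{Z})\oplus {\mathbb{Z}}^b$ for $i=2,3,4,5$ and $H_i(M;\mathbb{Z})\cong H_i(M_0;\mathbb{Z})$ otherwise. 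Choosing $M_0$ with $H_2(M_0;\mathbb{Z})\cong A$, $H_4(M_0;\mathbb{Z})\cong B^{\prime}$, $H_3(M_0;\mathbb{Z})$ free and the desired first Pontryagin class gives property (1) with $H_2(M;\mathbb{Z})\cong A\oplus B$, $H_4(M;\mathbb{Z})\cong B\oplus B^{\prime}$ and $H_3(M;\mathbb{Z})$ free, and then the first sentence of property (2) follows from universal coefficients because every homology group is free. An ATSS introduces only index-$1$ singular points near which the regular fibres are disjoint unions of standard $3$-spheres (Definition \ref{def:5}), modifies $M_0$ only in a neighbourhood of the surgery locus (so $\pi_1$ and the first Pontryagin class are carried over as in \cite{kitazawa7}), keeps $f {\mid}_{S(f)}$ normal by the form of the local models in the proof of Proposition \ref{prop:1}, and preserves triviality of the total Stiefel--Whitney class by the last clause of Proposition \ref{prop:2}; this yields properties (3)--(7), the spin condition and simple-connectedness exactly as in \cite{kitazawa7} (and \cite{kitazawa8,kitazawa9} for the features caused by the crossings).

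It remains to verify property (2). Every product among classes that are already present in $M_0$, or among generators that the ATSS does not alter, is checked verbatim as in \cite{kitazawa7}. The only genuinely new relations are the two replacements listed in property (2). Under the identifications ${b_{j,2}}^{\ast}={\rm D}(e_j)$ and ${b_{j,4}}^{\ast}={\rm D}({e_j}^{\prime})$ supplied by Proposition \ref{prop:2}, the relation ${b_{j_1,2}}^{\ast}\cup {b_{j_2,2}}^{\ast}=h_{j_1,j_2}{b_{j_1,4}}^{\ast}+h_{j_2,j_1}{b_{j_2,4}}^{\ast}$ is precisely the second bullet of Proposition \ref{prop:3}; the modified product ${b_{i,2}}^{\ast}\cup {\phi}_{4,3}((b_i,0))=\sum_{j=1}^{a}a_{i,j}{\phi}_{2,5}((a_j,0))+\sum_{j=1}^{b}h_{i,j}{\phi}_{2,5}((0,b_j))$ then follows from it together with the products already established in \cite{kitazawa7}, via the Poincar\'e-duality computation on $W_f$ performed in \cite{kitazawa8} and \cite{kitazawa9} (it is the homology class recorded in the third bullet of the proof of Proposition \ref{prop:3} that makes the coefficients $h_{i,j}$ enter).

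The step I expect to be the actual obstacle is the first one: constructing, inside a connected component $P$ as in Definition \ref{def:3} (over the regular value set of $f_0$), a normal system of submanifolds $\{(S_j,N(S_j),c_j)\}_{j=1}^{b}$ compatible with $f_0$ whose family of sphere immersions realizes an \emph{arbitrary} symmetric integral matrix $H$ with vanishing diagonal through its crossings, while simultaneously realizing the prescribed linking numbers $a_{i,j}$ with the $A$-handles and remaining normal in the sense of Definition \ref{def:3}. Once such a configuration is in place, Propositions \ref{prop:1}--\ref{prop:3} together with the arguments of \cite{kitazawa7,kitazawa8,kitazawa9} reduce everything else to bookkeeping, consistently with the remark before Proposition \ref{prop:2} that the statement is proved ``in an almost similar way''.
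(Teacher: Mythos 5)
Your overall route coincides with the paper's: start from the special generic map of Example \ref{ex:2} on a connected sum of $a$ copies of $S^2\times S^5$, perform an ATSS along a normal system of $b$ immersed $2$-spheres with mutual crossings so that Proposition \ref{prop:3} supplies the two replaced product formulas, and verify the dimensional hypotheses $0<2<3<4<5<7$ and $3\neq 4$ exactly as you do. The step you flag as the real obstacle (realizing $H$ and the linking data $a_{i,j}$ simultaneously by a normal system) is indeed where the paper is thinnest; note, however, that in Proposition \ref{prop:3} the matrix $H$ is \emph{not} encoded as the signed count of geometric crossings of $c_{j_1}{\mid}_{S_{j_1}}$ and $c_{j_2}{\mid}_{S_{j_2}}$ --- the hypothesis only asks for sufficiently many crossings, and the coefficients $h_{j_1,j_2}$ are produced by the choice of identifications of products of disks made during the ATSS (the third bullet in the proof of Proposition \ref{prop:3}). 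Your reading would force you to realize possibly negative prescribed intersection numbers geometrically, which is unnecessary and not what the construction does.

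The genuine gap is your treatment of property (4) and, secondarily, of the summand $B^{\prime}$. You propose to choose $M_0$ ``with \dots the desired first Pontryagin class.'' But $M_0$ comes from Example \ref{ex:2}, hence is a connected sum of products of spheres and is stably parallelizable, so $p_1(M_0)=0$; worse, the target class $4d\circ{\phi}_4(p)$ lies in the part of $H^4(M;\mathbb{Z})$ generated by classes that only come into existence through the ATSS, so it cannot be prepared in the base at all. The paper obtains the prescribed first Pontryagin class (together with the vanishing of $w_4$ and the spin condition) only \emph{after} the ATSS, by excising the trivial tubular neighborhoods $S^3\times D^4$ of fiber $3$-spheres generating $H_3(M;\mathbb{Z})$ and regluing them via clutching maps of linear $S^3$-bundles over $S^4$, invoking the classical computation of $p_1$ for such bundles; this regluing step is absent from your proposal and is not bookkeeping. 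Similarly, the paper realizes $B^{\prime}\subset H_4(M;\mathbb{Z})$ not in the base but by $b^{\prime}$ further surgeries along points (Remark \ref{rem:1} with $(m,n,k)=(7,4,0)$); your alternative of inserting $b^{\prime}$ copies of $S^3\times S^4$ into the connected sum is workable for the homology and product statements, but it does not rescue the Pontryagin-class argument.
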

Note that the statement is same as that of Theorem \ref{thm:1} except several properties. The proof is also similar to the original theorem. New ingredients are discussions using Proposition \ref{prop:3} for example.
See also \cite{kitazawa6} and \cite{kitazawa7} to understand the proof more rigorously, for example.
\begin{proof}
We consider a special generic map $f_0$ presented in Example \ref{ex:2} into ${\mathbb{R}}^4$ on a manifold represented as a connected sum of $a \geq 0$ copies of $S^2 \times S^5$.
We identify $a_j \in A$ in the assumption with a generator of the 2nd homology group of the $j$-th copy of $S^2 \times S^5$ and a suitable class in the Reeb space $W_{f_0}$ presented in the example.

We can choose a family $\{c_j:S_j \rightarrow {\rm Int} W_{f_0}\}_{j=1}^b$ of immersions of $2$-dimensional standard spheres into the interior of the image of $f_0$ or $W_{f_0}$ so that the family is normal and that for any pair of the immersions, there exist sufficiently many crossings as assumed in Proposition \ref{prop:3}.

We can also choose $b^{\prime}$ points embedded disjointly in the complement of the union of the images of the immersions before in the interior of the image of $f_0$ or $W_{f_0}$.
We can take them so that the following properties hold.
\begin{itemize}
\item Let a generator of the 2nd homology group of the $j$-th copy of $D^2 \times S^2$ be denoted by $a_j$ in the image, represented as a boundary connected sum of $a$ copies of $D^2 \times S^2$. It can be identified with the element $a_j \in A$ of the assumption.
\item Let a fundamental class of $S_i$ be denoted by $[S_i]$. ${c_i}_{\ast}([S_i])={\Sigma}_{j=1}^{a} a_{i,j} a_j \in H_2(W_{f_0};\mathbb{Z})$: ${c_i}_{\ast}$ is the homomorphism between the homology groups induced from $c_i$.
\end{itemize}

We can do an ATSS so that the resulting map and the $7$-dimensional manifold satisfy the fifth, sixth and seventh properties.

The first property and statements on 2nd and 4-th homology groups of the second property hold for a suitable ATSS. We use Proposition \ref{prop:2} setting $(m,n)=(7,4)$ in Proposition \ref{prop:2} and Remark \ref{rem:1} setting $(m,n,k)=(7,4,0)$.

We can identify the homology class represented by $E_S \times \{{{\ast}_{E_S}}^{\prime}\}$ in (the proof of) Proposition \ref{prop:2} for each $S_i$. The class represented by this is identified with $b_i \in B$ in the assumption as we have done for $a_j$ and we thus obtain a desired isomorphism ${\phi}_2:A \oplus B \rightarrow H_2(M;\mathbb{Z})$.
We can apply Proposition \ref{prop:2} for the $4$-th homology group. In addition, for the $b^{\prime}$ points $p_i$, we apply Remark \ref{rem:1}. We can consider similar identifications for ${b_i}^{\prime}$. This yields a desired isomorphism ${\phi}_4:B \oplus B^{\prime} \rightarrow H_4(M;\mathbb{Z})$.

We prove properties on products of cohomology classes in the second property of the seven properties for a suitable ATSS.

We prove the properties for cases where products are in $H^4(M;\mathbb{Z})$.

The product of ${a_{i_1}}^{\ast}$ and ${a_{i_2}}^{\ast}$ vanish by the identifications before.
Consider the product of a cocycle representing the dual ${b_{i,2}}^{\ast}$ of the class $b_{i}$ and a cocycle representing the dual ${a_{j}}^{\ast}$ of the class $a_{j}$ and value at the cycle represented as the tensor product of a cycle in the class ${\Sigma}_{j=1}^{a} a_{i,j}a_j$ and a cycle in the class represented by $E_S \times \{{{\ast}_{E_S}}^{\prime}\}$ in the proof of Proposition \ref{prop:2} (we consider a cohomology class in the product $M \times M$ defined in a canonical way and the product is obtained as the pull-back via the diagonal map from $M$ into $M \times M$). For the product of ${b_{i_1,2}}^{\ast}$ and ${b_{i_2,2}}^{\ast}$, apply Proposition \ref{prop:3} and this is a new ingredient in the proof.

We prove the properties for cases where products are in $H^5(M;\mathbb{Z})$.

We investigate several homomorphisms. ${\phi}_{4,3} \circ {{\phi}_{4}}^{-1}$ maps the 4-th homology class ${\phi}_4((b_j,0))$ or ${\phi}_4((0,{b_j}^{\prime}))$ before to the dual of the homology class represented by a connected component
of the preimage of a suitable regular value. We explain about ${\phi}_{2,5} \circ {{\phi}_2}^{-1}$. This maps each ($n-\dim S_i$)-th homology class represented by $E_S \times \{{{\ast}_{E_S}}^{\prime}\}$ in the proof of Proposition \ref{prop:2} to the dual of the homology class represented by a product of $\{{{\ast}_{E_S}}^{\prime \prime}\} \times S^{m-n} \subset E_D \subset E_0$ in the proof of Proposition \ref{prop:2} and the image of an embedding of $S_i$ into $f^{-1}(c_i(S_i))$ such that the composition with $f$ and the immersion $c_i$ agree in the proof of the proposition.

The first three products in Theorem \ref{thm:1} (\ref{thm:1.2.2}) vanish. In fact, for the pair of the cohomology classes, we can take corresponding cycles, representing the classes whose duals are these cohomology classes satisfying the following: each cohomology class does not vanish at the corresponding cycle and the cycles do not intersect.

We prove the fourth product in Theorem \ref{thm:1} (\ref{thm:1.2.2}).
We investigate several classes. The class ${c_{j_2}}_{\ast}([S_{j_2}])$ coincides with ${\Sigma}_{j_1=1}^{a} a_{j_2,j_1}a_{j_1} \in H_2(W_{f_0};\mathbb{Z})$. ${\phi}_{2,5}$ maps each ($n-\dim S_{j_2}$)-th homology class represented by $E_S \times \{{{\ast}_{E_S}}^{\prime}\}$ in the proof of Proposition \ref{prop:2} to the dual of the homology class represented by a product of the image of an embedding of $S_{j_2}$ into $f^{-1}(c_{j_2}(S_{j_2}))$ such that the composition with $f$ and the immersion $c_{j_2}$ agree in the proof of the proposition and $\{{{\ast}_{E_S}}^{\prime \prime}\} \times S^{m-n} \subset E_D \subset E_0$ there. We can see that the product of ${a_{j_1}}^{\ast}$ and ${\phi}_{4,3}((b_{j_2},0))$ is $a_{j_2,j_1}{\phi}_{2,5}((0,b_{j_2})) \in H^5(M;\mathbb{Z})$ for any $j_1$ and $j_2$.

We prove the fifth case. The proof is a new ingredient. ${\Sigma}_{j=1}^{a} a_{i,j}a_j \in H_2(W_{f_0};\mathbb{Z})$ and ${c_{i}}_{\ast}([S_{i}])$ agree. Thus, by virtue of the discussion in the proof of Proposition \ref{prop:2} and by considering the class to which the isomorphism ${\phi}_{2,5} \circ {{\phi}_2}^{-1}$ maps $a_j \in H_2(M;\mathbb{Z})$, for the product of ${b_{i,2}}^{\ast}$ and ${\phi}_{4,3}((b_{i},0))$, the value at the class whose dual is ${\phi}_{2,5} \circ {{\phi}_2}^{-1}(a_j) \in H^5(M;\mathbb{Z})$ is $a_{i,j}$. Here the identification of elements in $A$ and the homology group is also considered. The ATSS we do here respects Proposition \ref{prop:3} and this requires a new ingredient. Consider the value at the class whose dual is ${\phi}_{2,5} \circ {{\phi}_2}^{-1}(b_j) \in H^5(M;\mathbb{Z})$ where the identification as before is considered. The value is $h_{i,j}$.

This completes the proof of the last case and the proof for cases where products are in $H^5(M;\mathbb{Z})$.

We explain about cases where products are in $H^7(M;\mathbb{Z})$. Proofs of the facts that the products vanish are done in the following way: for the pair of the cohomology classes, we can take corresponding cycles disjointly, representing the classes whose duals are these cohomology classes so that each class does not vanish at these cycles. Remaining cases can be shown by virtue of the definitions of ${\phi}_{2,5}$ and ${\phi}_{4,3}$ and Poincare duality theorem.

For the third property, Proposition \ref{prop:2} and Remark \ref{rem:1} complete the proof.

We show the fourth property. $H_3(M;\mathbb{Z})$ is generated by a finite set consisting of the classes represented by connected components of preimages of regular values. We exchange small tubular
neighborhoods of the connected components, yielding trivial linear bundles over $D^4$ whose fiber is diffeomorphic to $S^3$, and change the map. More precisely, we change bundle isomorphisms for identifications between the boundaries: each connected component of the boundary is $S^3 \times \partial D^4$. Classical important facts on Pontryagin classes of linear bundles over $S^4$ whose fibers are diffeomorphic to $S^3$ yield the fourth property. See also \cite{milnorstasheff} and \cite{steenrod} for example. More precise expositions on this are also in \cite{kitazawa7} for example.

This completes the proof.
\end{proof}

It is important that this theorem explains the cohomology ring of the manifold $M$ completely as Theorem \ref{thm:1} does by virtue of fundamental theorems from algebraic topology such as Poincar\'e duality theorem and universal coefficient theorem.

We can show the following proposition as an extension of Theorem \ref{thm:3}. We omit a proof and proofs are left to readers: see the original article \cite{kitazawa7}.

\begin{Prop}
In the situation of Theorem \ref{thm:5}, if at least one of the following conditions is satisfied, then $M$ admits no special generic maps into ${\mathbb{R}}^4$.
\begin{enumerate}
\item $p \in B \oplus C$ is not zero.
\item In some sequence $\{a_{i,j}\}_{j=1}^{a}$, at least one non-zero number exists.
\item There exists at least one component of $H$ whose value is is not zero.
\end{enumerate}
\end{Prop}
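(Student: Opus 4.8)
The plan is to argue by contradiction: suppose $M$ carries a special generic map $g\colon M\rightarrow {\mathbb{R}}^4$ and show that then none of the three listed conditions can hold. The preliminary step is to collect the standard structural facts on special generic maps (see \cite{saeki}, \cite{saekisakuma}, \cite{wrazidlo} and Example \ref{ex:1}). Since $M$ is closed, $g$ is not a submersion, so $S(g)\neq\emptyset$; hence the Reeb space $W_g$ is a compact $4$-dimensional manifold with nonempty boundary $\partial W_g=q_g(S(g))$ which admits a smooth immersion into ${\mathbb{R}}^4$. From this I would record three facts: (i) $W_g$ is homotopy equivalent to a CW complex of dimension at most $3$, so $H^4(W_g;\mathbb{Z})=0$, and $TW_g$ is trivial (the immersion is a local diffeomorphism); (ii) $q_g$ induces an isomorphism $q_g^{\ast}\colon H^j(W_g;\mathbb{Z})\rightarrow H^j(M;\mathbb{Z})$ for $j\leq m-n-1=2$, a standard consequence of the generic fibre of $q_g$ being a $2$-connected $3$-sphere; (iii) because $M$ is simply-connected, $M$ is diffeomorphic to the boundary of the total space $E$ of a linear $D^4$-bundle $\pi\colon E\rightarrow W_g$ associated to a rank-$4$ vector bundle $\mu$ over $W_g$.

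I would then dispose of conditions (2) and (3) at once, using only (i) and (ii). For any $x,y\in H^2(M;\mathbb{Z})$, write $x=q_g^{\ast}(\bar{x})$ and $y=q_g^{\ast}(\bar{y})$ with $\bar{x},\bar{y}\in H^2(W_g;\mathbb{Z})$; then $x\cup y=q_g^{\ast}(\bar{x}\cup\bar{y})\in q_g^{\ast}\bigl(H^4(W_g;\mathbb{Z})\bigr)=\{0\}$. Thus any closed manifold admitting a special generic map into ${\mathbb{R}}^4$ has identically vanishing cup product pairing $H^2\otimes H^2\rightarrow H^4$. On the other hand, Theorem \ref{thm:5}\,(2) gives ${a_j}^{\ast}\cup {b_{i,2}}^{\ast}=a_{i,j}\,{b_{i,4}}^{\ast}$, and, by the replacement stated in Theorem \ref{thm:5}\,(2)(a) together with the symmetry of $H$, ${b_{i,2}}^{\ast}\cup {b_{j,2}}^{\ast}=h_{i,j}\bigl({b_{i,4}}^{\ast}+{b_{j,4}}^{\ast}\bigr)$ for $i\neq j$. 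Since $H^4(M;\mathbb{Z})$ is free and the classes ${b_{\cdot,4}}^{\ast}$ belong to a basis of it, the first product is nonzero whenever some $a_{i,j}\neq 0$ and the second is nonzero whenever some $h_{i,j}\neq 0$; either situation contradicts the vanishing of the pairing, so each of (2) and (3) rules out a special generic map into ${\mathbb{R}}^4$.

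For condition (1) I would combine (iii) with (i). The total space $E=D(\mu)$ deformation retracts onto $W_g$ via the zero section, and its tangent bundle is $TE\cong\pi^{\ast}(TW_g\oplus\mu)$; since $TW_g$ is trivial, $p_1(TE)=\pi^{\ast}p_1(\mu)$, which lies in $\pi^{\ast}H^4(W_g;\mathbb{Z})=\{0\}$ by (i). Restricting along $M=\partial E\hookrightarrow E$ yields $p_1(M)=0\in H^4(M;\mathbb{Z})$. But Theorem \ref{thm:5}\,(4) identifies $p_1(M)$ with $4\,d\circ{\phi}_4(p)$ and $d\circ{\phi}_4$ is an isomorphism, so $p\neq 0$ forces $p_1(M)\neq 0$, a contradiction. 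This is essentially the argument behind Theorem \ref{thm:3}; see \cite{kitazawa7} for the details.

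Relative to Theorem \ref{thm:3}, the only genuinely new point is that the new summand $h_{i,j}\bigl({b_{i,4}}^{\ast}+{b_{j,4}}^{\ast}\bigr)$ of the product structure is nonzero exactly when $h_{i,j}\neq 0$, which is immediate once one knows $H^4(M;\mathbb{Z})$ is free on the given basis. The step I would be most careful with in a full write-up is the invocation of the structure theory for special generic maps --- notably fact (iii) --- since that is where the hypotheses on $M$ (closedness and, conveniently, simple-connectivity) really enter; everything after that is formal.
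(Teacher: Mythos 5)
Your proposal is correct and follows the same route the paper intends (the paper itself omits the proof, deferring to \cite{kitazawa7}): conditions (2) and (3) are excluded because for a special generic map $g$ into ${\mathbb{R}}^4$ the cup product $H^2(M;\mathbb{Z})\otimes H^2(M;\mathbb{Z})\rightarrow H^4(M;\mathbb{Z})$ factors through $H^4(W_g;\mathbb{Z})=0$, while Theorem \ref{thm:5} produces the nonzero products $a_{i,j}{b_{i,4}}^{\ast}$ and $h_{j_1,j_2}({b_{j_1,4}}^{\ast}+{b_{j_2,4}}^{\ast})$ on basis elements of a free group, and condition (1) is excluded by the vanishing of $p_1(M)$. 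For the last point, note that the linearity of the $D^4$-bundle in your fact (iii) is not actually needed: any compact $8$-manifold $E$ with $\partial E=M$ that deformation retracts onto the at most $3$-dimensional complex $W_g$ has $H^4(E;\mathbb{Z})=0$, so $p_1(TE)=0$ and hence $p_1(M)=p_1(TE{\mid}_M)=0$.
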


\section{An additional remark on integral cohomology rings of $7$-dimensional closed and simply-connected manifolds admitting these fold maps.}
\label{sec:3}
We add a remark on the class of $7$-dimensional closed, simply-connected and spin manifolds in Theorems \ref{thm:1} and \ref{thm:5}. 

Consider the cases where $a+b=3$ and $b^{\prime}=c=0$ in these theorems. As presented in the introduction, \cite{kitazawa8} presents a study of topological properties of Reeb spaces of fold maps having topological properties similar to ones in the present paper and Example 2 in this preprint presents the case where $a=1$, $b=2$, $a_{1,1} \neq 0$, $a_{1,2} \neq 0$, and $h_{1,2}=h_{2,1} \neq 0$ hold. In Theorem \ref{thm:1}, we can find a submodule of rank $2$ of $H^2(M;\mathbb{Z})$ consisting of elements whose squares vanish: more generally this holds in the case $a+b>2$ holds. The example implies that in this case for Theorem \ref{thm:5} we cannot find a submodule of rank $2$ of $H^2(M;\mathbb{Z})$ consisting of elements whose squares vanish. 

For example, let $a_{1,1}=1$ and $a_{1,2}=1$, and let $h_{1,2}=h_{2,1}=1$.
Consider the square of $r_1{a_1}^{\ast}+r_2{b_{1,2}}^{\ast}+r_3{b_{2,2}}^{\ast}$ for $r_1,r_2,r_3 \in \mathbb{Z}$. The square is $2r_1r_2{b_{1,4}}^{\ast}+r_2r_3({b_{1,4}}^{\ast}+{b_{2,4}}^{\ast})+2r_1r_3{b_{2,4}}^{\ast}$.
It vanishes if and only if at least one of the following two conditions.
\begin{enumerate}
\item $r_2=0$, either $r_1$ or $r_3$ is $0$.
\item $2r_1+r_3=0$, $r_2r_3-{r_3}^2=0$.
\end{enumerate}
In each cases, $(r_1,r_2,r_3)$ lies in a suitable real line in ${\mathbb{R}}^3$ containing the origin.

\begin{Cor}
The class of $7$-dimensional closed, simply-connected and spin manifolds $M$ in Theorem \ref{thm:5} is wider than that in Theorem \ref{thm:1}.
\end{Cor}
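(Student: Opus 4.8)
The plan is to show the corollary by exhibiting a single pair of invariants for which the cohomology ring produced by Theorem \ref{thm:5} cannot be realized by any manifold arising from Theorem \ref{thm:1}, while every manifold from Theorem \ref{thm:1} does arise from Theorem \ref{thm:5} by the trivial specialization $H=O$. The latter inclusion is essentially formal: setting all $h_{j_1,j_2}=0$ in Theorem \ref{thm:5} recovers exactly the cohomology ring structure of Theorem \ref{thm:1} (with $C$, $c_j$ renamed to $B^{\prime}$, $b_j^{\prime}$), and the remaining data $(A,B,B^{\prime},\{a_{i,j}\},p)$ match verbatim; so the class in Theorem \ref{thm:5} contains the class in Theorem \ref{thm:1}. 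It therefore remains to prove strict containment, i.e. to produce an $M$ from Theorem \ref{thm:5} that is not diffeomorphic — indeed not even isomorphic in its integral cohomology ring — to any $M$ from Theorem \ref{thm:1}.

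First I would fix the concrete parameters already singled out in the discussion preceding the corollary: $a=1$, $b=2$, $b^{\prime}=0$, with $a_{1,1}=a_{1,2}=1$ and $h_{1,2}=h_{2,1}=1$ (all other entries of $H$ zero). This is a legitimate input to Theorem \ref{thm:5}, yielding a $7$-dimensional closed simply-connected spin manifold $M$ with $H^2(M;\mathbb{Z})\cong\mathbb{Z}^3$ generated by $a_1^{\ast}, b_{1,2}^{\ast}, b_{2,2}^{\ast}$ and with the squaring map on $H^2$ given, as computed in the text, by
\[
(r_1 a_1^{\ast}+r_2 b_{1,2}^{\ast}+r_3 b_{2,2}^{\ast})^2 = 2r_1r_2\, b_{1,4}^{\ast} + r_2r_3\,(b_{1,4}^{\ast}+b_{2,4}^{\ast}) + 2r_1r_3\, b_{2,4}^{\ast}.
\]
The next step is the ring-theoretic invariant: I would show that the quadratic form $q\colon H^2(M;\mathbb{Z})\to H^4(M;\mathbb{Z})$, $x\mapsto x^2$, for this $M$ has the property that its zero locus in $H^2(M;\mathbb{R})\cong\mathbb{R}^3$ is contained in a finite union of lines through the origin — precisely the statement already verified in the text, where the vanishing of the square forces $(r_1,r_2,r_3)$ onto one of two real lines. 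By contrast, for any manifold $M'$ produced by Theorem \ref{thm:1} with $a+b=3$ and $c=0$, property \ref{thm:1.2.1} gives that $a_{j}^{\ast}$ and $b_{j,2}^{\ast}$ all have vanishing square and, together with the mixed products $a_{j_1}^{\ast}b_{j_2,2}^{\ast}=a_{j_2,j_1}b_{j_2,4}^{\ast}$ and $b_{j_1,2}^{\ast}b_{j_2,2}^{\ast}=0$, the squaring form vanishes identically on the rank-$\geq 2$ submodule spanned by $\{b_{1,2}^{\ast},b_{2,2}^{\ast}\}$ (and in fact on any rank-$2$ subspace when $a+b>2$, since one can always choose two generators among the $a_j^{\ast}, b_{j,2}^{\ast}$ whose pairwise product vanishes). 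Thus every $M'$ from Theorem \ref{thm:1} admits a rank-$2$ direct summand of $H^2$ on which the square is identically zero; the $M$ above does not. Since the existence of such a summand is an isomorphism invariant of the graded ring $H^{\ast}(M;\mathbb{Z})$, $M$ is not ring-isomorphic to any $M'$ from Theorem \ref{thm:1}, and in particular not diffeomorphic to one.

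The main obstacle is the linear-algebra verification that no rank-$2$ subgroup of $H^2(M;\mathbb{Z})\cong\mathbb{Z}^3$ is totally isotropic for this $q$: one must rule out all rational $2$-planes, not merely the coordinate ones. Concretely, if $V\subset H^2(M;\mathbb{R})$ were a $2$-plane with $q|_V\equiv 0$, then polarizing gives a vanishing associated bilinear form on $V$, so $V$ would be a line's worth of directions inside the zero quadric of each coordinate of $q$; intersecting the three conics $2r_1r_2=0$, $r_2r_3=0$, $2r_1r_3+r_2r_3=0$ in $\mathbb{P}^2_{\mathbb{R}}$ and checking that their common solution set is a finite set of points (hence spans no $2$-plane) finishes it — this is the content of the ``suitable real line'' remark in the text, which I would make fully rigorous. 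Everything else (spin, simple-connectivity, the computation of $H^{\ast}$, realizability of the parameters) is supplied directly by Theorem \ref{thm:5} and Theorem \ref{thm:1}, so no further geometric input is needed.
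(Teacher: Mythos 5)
Your proposal is correct and takes essentially the same route as the paper: the paper's Section 3 discussion proves the corollary by exhibiting exactly this example ($a=1$, $b=2$, $a_{1,1}=a_{1,2}=1$, $h_{1,2}=h_{2,1}=1$), computing the square of a general element of $H^2$, and observing that its zero locus is a union of lines, so that no rank-$2$ square-zero submodule exists, whereas every Theorem \ref{thm:1} manifold with $a+b>2$ has one. Your additions (the explicit $H=O$ specialization for the containment direction and the careful check that no rational $2$-plane is totally isotropic) merely make rigorous what the paper leaves implicit.
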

\section{Acknowledgement.}
\label{sec:4}
\thanks{The author is a member of JSPS KAKENHI Grant Number JP17H06128 "Innovative research of geometric topology and singularities of differentiable mappings"
\begin{center}
(https://kaken.nii.ac.jp/en/grant/KAKENHI-PROJECT-17H06128/: Principal investigator is Osamu Saeki).
\end{center}
This work was also supported by
\begin{center}
''The Sasakawa Scientific Research Grant" (2020-2002 : https://www.jss.or.jp/ikusei/sasakawa/).
\end{center}
}
We declare that all data supporting our present study are all contained in the present paper.
\section{Appendices.}
\label{sec:6}
We extend some notions, propositions and theorems in the present paper.

\begin{Def}
\label{def:6}
Let $l>0$ and $l^{\prime} \geq 0$ be integers. Assume that there exist families $\{S_j\}_{j=1}^{l}$ of compact and connected spaces each of which is regarded as a polyhedron obtained in the following.
\begin{itemize}
\item Prepare finitely many standard spheres.
\item Choose finitely many pairs of points in the spheres such that distinct pairs do not have common points.
\item Identify two points in each pair. Let $S(S_{j})$ denotes the set of all points in $S_j$ originally in some pair of the previous pairs.
\end{itemize}
Assume also that we can take a family $\{N(S_j)\}_{j=1}^{l}$ of compact and smooth manifolds each of which is regarded as a regular neighborhood of $S_j$. Assume that the dimensions of $N(S_j)$ are always $n$
and that there exist smooth immersions $c_j:N(S_j) \rightarrow P$ satisfying the following properties.
\begin{enumerate}
\item $f {\mid}_{f^{-1}({\bigcup}_{j=1}^l c_j(N(S_j)))}: f^{-1}({\bigcup}_{j=1}^l c_j(N(S_j))) \rightarrow {\bigcup}_{j=1}^l c_j(N(S_j))$ gives a trivial smooth bundle whose fiber is diffeomorphic to $S^{m-n}$.
\item The family $\{{c_j} {\mid}_{\partial N(S_j)}:\partial N(S_j) \rightarrow P\}_{j=1}^l$ is normal.
\item The family $\{{c_j} {\mid}_{S_j-S(S_j)}:S_j-S(S_j) \rightarrow P\}_{j=1}^l$ is normal and the number of crossings is finite.
\item The disjoint union ${\sqcup}_{j=1}^l {c_j} {\mid}_{{\sqcup}_{j=1}^l S(S_j)}:{\sqcup}_{j=1}^l S(S_j) \rightarrow P$ is injective. The image of the disjoint union ${\sqcup}_{j=1}^l {c_j} {\mid}_{{\sqcup}_{j=1}^l S_j-S(S_j)}:{\sqcup}_{j=1}^l S_j-S(S_j)\rightarrow P$ and the image of the map are disjoint.
\item Let the set of all crossings of the family of the immersions $\{{c_j} {\mid}_{S_j-S(S_j)}:S_j-S(S_j) \rightarrow P\}_{j=1}^l$ denoted by $\{p_{j^{\prime}}\}_{j^{\prime}=1}^{l^{\prime}}$. For each $p_{j^{\prime}}$, there exist one or two integers $1 \leq a(j^{\prime}),b(j^{\prime}) \leq l$ and small standard closed disks $D_{2j^{\prime}-1} \subset S_{a(j^{\prime})}$ and $D_{2j^{\prime}} \subset S_{b(j^{\prime})}$ satisfying the following four properties.
\begin{enumerate}
\item $\dim D_{2j^{\prime}-1}=\dim S_{a(j^{\prime})}$ and $\dim D_{2j^{\prime}}=\dim S_{b(j^{\prime})}$.
\item $p_{j^{\prime}}$ is in the images of the immersions $p_{j^{\prime}} \in c_{a(j^{\prime})}({\rm Int } D_{2j^{\prime}-1})$ and $p_{j^{\prime}} \in c_{b(j^{\prime})}({\rm Int } D_{2j^{\prime}})$.
\item If $a(j^{\prime})=b(j^{\prime})$, then $D_{2j^{\prime}-1}\bigcap D_{2j^{\prime}}$ is empty.
\item If we restrict the bundle $N(S_{a(j^{\prime})})$ over the sphere to $D_{2j^{\prime}-1}$ and the bundle $N(S_{b(j^{\prime})})$ over the sphere to $D_{2j^{\prime}}$, then the images of the total spaces of these resulting bundles by $c_{a(j^{\prime})}$ and $c_{b(j^{\prime})}$ agree as subsets in ${\mathbb{R}}^n$: the restrictions of the immersions to these spaces are embeddings.
\item The set of all crossings of the family $\{{c_j} {\mid}_{\partial N(S_j)}:\partial N(S_j)-S(S_j) \rightarrow P\}_{j=1}^l$ is the disjoint union of the $l^{\prime}$ corners of the subsets just before each of which is for $1 \leq j^{\prime} \leq l^{\prime}$. 
\end{enumerate}
\end{enumerate}
In this situation, the family $\{(S_j,N(S_j),c_j:N(S_j) \rightarrow P)\}_{j=1}^{l}$ is said to be a {\it normal system of submanifolds and subpolyhedra} compatible with $f$. 
\end{Def}
For regular neighborhoods in the smooth category, see \cite{hirsh} for example.
Via fundamental arguments on differential topology, in Definition \ref{def:6}, we have a pair of normal systems of submanifolds and subpolyhedra compatible with $f$ in the following definition as Definition \ref{def:4}.

\begin{Def}
\label{def:7}
The familiy $\{(S_j,N(S_j),c_j:N(S_j) \rightarrow P)\}_{j=1}^{l}$ is said to be a {\it wider normal system supporting} the normal system of submanifolds and subpolyhedra $\{(S_j,N^{\prime}(S_j),{c_j} {\mid}_{N^{\prime}(S_j)}:N^{\prime}(S_j) \rightarrow P)\}_{j=1}^{l}$ compatible with $f$.
\end{Def}

We easily have the following proposition. We can prove similarly and omit its proof.

\begin{Prop}
\label{prop:5}
we can define an {\rm ATSS} and the {\rm generating image} of the operation as Definition \ref{def:5} for Definition \ref{def:6} {\rm (}\ref{def:7}{\rm )}.
In the situation similar to Definition \ref{def:3} {\rm (}\ref{def:5}{\rm )}, we have a proposition similar to Proposition \ref{prop:1}.
\end{Prop}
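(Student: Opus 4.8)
The plan is, first, to transcribe Definition \ref{def:5} to the polyhedral setting and, second, to rerun the construction in the proof of Proposition \ref{prop:1}, treating each identified point of $S(S_j)$ as one further transverse crossing.

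First I would define an {\rm ATSS} and its {\rm generating image} for Definition \ref{def:6} {\rm (}\ref{def:7}{\rm )} word for word as in Definition \ref{def:5}. This is legitimate because Definition \ref{def:5} invokes only the abstract data of a normal system $\{(S_j,N(S_j),c_j:N(S_j) \rightarrow P)\}_{j=1}^l$, a wider normal system supporting it, and the five properties demanded of the resulting stable fold map $f^{\prime}:M^{\prime} \rightarrow {\mathbb{R}}^n$; not one of these clauses uses that the spaces $S_j$ are standard spheres. Hence, after replacing "normal system of submanifolds" by "normal system of submanifolds and subpolyhedra," the notions of {\rm ATSS} and of the {\rm generating image} $\bigcup_{j=1}^l c_j(S_j)$ carry over unchanged.

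Next I would establish the analogue of Proposition \ref{prop:1} by importing its proof. Around each crossing $p_{j^{\prime}}$ of the family $\{c_j {\mid}_{S_j-S(S_j)}\}_{j=1}^l$, condition (5) of Definition \ref{def:6} furnishes precisely the transverse pair of thickened disks that condition (4) of Definition \ref{def:3} furnishes in the smooth case, so the local map obtained by gluing the two product maps built from ${\tilde{f_{{\rm SD}}}}_{\dim D_{2j^{\prime}-1},m-n}$ and ${\tilde{f_{{\rm SD}}}}_{\dim D_{2j^{\prime}},m-n}$ is installed verbatim. Over the remaining new singular values, whose preimages are single points, I would place product maps of ${\tilde{f}}_{m-n,0}$ with identity maps, and over the new regular values trivial $S^{m-n}$-bundles, exactly as in Proposition \ref{prop:1}; these pieces then glue as there.

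The one genuinely new feature is the identification set $S(S_j)$, and I expect its local model to be the main obstacle. Here I would use that $N(S_j)$, being a regular neighborhood, is a smooth $n$-manifold and that $c_j$, an immersion between $n$-manifolds, is a local diffeomorphism near each identified point $q$; by regular-neighborhood theory {\rm (}see \cite{hirsh}{\rm )} I can place $N(S_j)$ near $q$ in standard form, so that the two branches of $S_j$ glued at $q$ appear as two transverse disks meeting only at $q$. Their images fill a local model of $\bigcup_{j=1}^l c_j(S_j)$ near $c_j(q)$ that is locally indistinguishable from a crossing, whence the very same gluing of ${\tilde{f_{{\rm SD}}}}$-type product maps applies and property (5) in Definition \ref{def:5}, that preimages of nearby regular values are disjoint unions of standard spheres, holds for the same reason. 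Condition (4) of Definition \ref{def:6}, which makes $\sqcup_{j=1}^l c_j {\mid}_{S(S_j)}$ injective and its image disjoint from $\bigcup_{j=1}^l c_j(S_j-S(S_j))$, guarantees that each identified point is isolated and free of interfering sheets, so these local models do not collide with those installed at the crossings $p_{j^{\prime}}$. With the local models at crossings and at identified points in place and the remaining cells filled by ${\tilde{f}}_{m-n,0}$-products and trivial bundles, the global gluing is identical to that of Proposition \ref{prop:1}, which finishes the argument.
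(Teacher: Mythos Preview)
The paper omits the proof of this proposition entirely, saying only that it can be proved similarly; so there is no detailed argument in the paper to compare against. Your sketch already goes further than the paper does, and your overall plan --- transcribe Definition \ref{def:5} verbatim to the polyhedral setting and then rerun the local construction from the proof of Proposition \ref{prop:1}, handling the identified points of $S(S_j)$ as one further local feature --- is exactly what ``similarly'' is meant to convey.

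One point deserves care. You model each identified point $q \in S(S_j)$ as ``locally indistinguishable from a crossing'' and propose installing the same two-sheet gluing of ${\tilde{f_{{\rm SD}}}}$-type product maps. At a genuine crossing $p_{j^{\prime}}$, however, the two transverse sheets come from \emph{separate} immersed pieces $c_{a(j^{\prime})}(N(S_{a(j^{\prime})}))$ and $c_{b(j^{\prime})}(N(S_{b(j^{\prime})}))$, so the new singular value set ${\bigcup}_j c_j(\partial N^{\prime}(S_j))$ has two components crossing there --- these are the corners recorded in condition (4)(e) of Definition \ref{def:3}. At an identified point both branches sit inside the \emph{single} manifold $N(S_j)$, and since $c_j$ is a local diffeomorphism near $q$ (your own observation), the image $c_j(\partial N^{\prime}(S_j))$ is locally embedded there; indeed condition (5)(e) of Definition \ref{def:6} lists only the $l^{\prime}$ crossing-corners and none coming from identified points. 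So the local fold model near $q$ carries a single new singular-value sheet rather than two crossing ones; the construction still goes through with the same Morse pieces, but it is not literally the crossing model. When the paper later sketches Proposition \ref{prop:6} it realizes $N(S_j)$ as a boundary connected sum of the disk bundles $N(S_{j,j^{\prime}})$, which makes this single-sheet structure explicit; your transverse-disk picture is an alternative regular-neighborhood model, and either one carries the ATSS construction through for the purposes of Proposition \ref{prop:5}.
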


\begin{Prop}
\label{prop:6}
\begin{enumerate}
\item We consider a situation as explained in Definition \ref{def:6} or Proposition \ref{prop:5}. Let the number of the spheres we take there be $l_j$ for each integer $1 \leq j \leq l$ and let $S_{j,j^{\prime}}$ denote the $j^{\prime}$-th sphere for $1 \leq j^{\prime} \leq l_j$ in $S_j$. Let $m>n \geq 1$ be integers. Let $n$ be even. Let $M$ be an $m$-dimensional closed and connected manifold.
For the normal system of submanifolds and subpolyhedra compatible with $f$, assume that $\dim S_j=\frac{n}{2}$ for any $j$. We also assume the relations $0<n-\frac{n}{2}=\frac{n}{2}<m-n,n<m-n+\frac{n}{2}=m-\frac{n}{2}<m$ and $m-n \neq n$.

In this situation, we can do an ATSS to $f$ such that the ${\bigcup}_{j=1}^l c_j(S_j)$ is the generating image of the operation and have a new map $f^{\prime}:M^{\prime} \rightarrow {\mathbb{R}}^n$ satisfying the following four properties.

\begin{itemize}
\item $H_{i}(M^{\prime};\mathbb{Z})$ is isomorphic to $H_{i}(M;\mathbb{Z}) \oplus {\mathbb{Z}}^l$ for $i=m-n,n$.
\item $H_{i}(M^{\prime};\mathbb{Z})$ is isomorphic to $H_{i}(M;\mathbb{Z}) \oplus {\mathbb{Z}}^{{\Sigma}_{j=1}^l l_j}$ for
$i=\frac{n}{2},m-\frac{n}{2}$.
\item $H_{i}(M^{\prime};\mathbb{Z})$ is isomorphic to $H_{i}(M;\mathbb{Z})$ for $i \neq \frac{n}{2},m-n,n,m-\frac{n}{2}$.
\item The total Stiefel-Whitney class of $M^{\prime}$ is $1 \in H^{0}(M^{\prime};\mathbb{Z}/2\mathbb{Z})$ if that of $M$ is so.
\end{itemize}

Furthermore, for the resulting Reeb space, we can construct the map satisfying the following two properties.
\begin{itemize}
\item $H_{i}(W_{f^{\prime}};\mathbb{Z})$ is isomorphic to $H_{i}(W_f;\mathbb{Z}) \oplus {\mathbb{Z}}^l$ for $i=\frac{n}{2},m-n,n,m-\frac{n}{2}$. Under suitable identifications of $H_{i}(M^{\prime};\mathbb{Z})$ with $H_{i}(M;\mathbb{Z}) \oplus {\mathbb{Z}}^l$ and $H_{i}(W_{f^{\prime}};\mathbb{Z})$ with $H_{i}(W_f;\mathbb{Z}) \oplus {\mathbb{Z}}^l$, the homomorphism ${q_{f^{\prime}}}_{\ast}$ between the homology groups induced from the quotient map $q_{f^{\prime}}$ maps an element $(0,p) \in H_{i}(M;\mathbb{Z}) \oplus {\mathbb{Z}}^l$ to $(0,p) \in H_{i}(W_f;\mathbb{Z}) \oplus {\mathbb{Z}}^l$. Under the identifications, ${q_{f^{\prime}}}_{\ast}$ maps an element $(p,0) \in H_{i}(M;\mathbb{Z}) \oplus {\mathbb{Z}}^l$ to $({q_f}_{\ast}(p),0) \in H_{i}(W_f;\mathbb{Z}) \oplus {\mathbb{Z}}^l${\rm :} the homomorphism ${q_f}_{\ast}$ is induced from $q_f:M \rightarrow W_f$ in a canonical way.
\item $H_{i}(W_{f^{\prime}};\mathbb{Z})$ is isomorphic to $H_{i}(W_f;\mathbb{Z})$ for $i \neq \frac{n}{2},m-n,n,m-\frac{n}{2}$.
\end{itemize}
\item In the case above, assume also that $n$ is divisible by $4$. 
Let $H:=(h_{j_1,j_2})$ be a ${\Sigma}_{j=1}^l l_j \times {\Sigma}_{j=1}^l l_j$ symmetric matrix values of whose components are integers satisfying $h_{j,j}=0$ for $1 \leq j \leq {\Sigma}_{j=1}^l l_j$ where $h_{j_1,j_2}$ denotes the $(j_1,j_2)$-th element.  
We also assume the following two.
\begin{enumerate}
\item For any pair $(c_{j_1} {\mid}_{S_{j_1}-S(S_{j_1})},c_{j_2} {\mid}_{S_{j_2}-S(S_{j_2})})$ of the immersions, there exist sufficiently many crossings. 
\item Identifications of homology groups used above are given.
\end{enumerate}
For $H_{\frac{n}{2}}(M;\mathbb{Z}) \oplus {\mathbb{Z}}^l$, let $e_{j,j^{\prime}}$ be the element represented as $(0,{e_{0,j,j^{\prime}}}) \in H_{\frac{n}{2}}(M;\mathbb{Z}) \oplus {\mathbb{Z}}^{{\Sigma}_{j=1}^l l_j}${\rm :} $e_{0,j,j^{\prime}}$ is the sequence of ${\Sigma}_{j=1}^l l_j$ integers the value of whose {\rm (}${\Sigma}_{j^{\prime \prime}=1}^{j-1} (l_{j^{\prime \prime}})+j^{\prime}${\rm )}-th component is $1$ and those of whose remaining components are all $0$.
For $H_{n}(M;\mathbb{Z}) \oplus {\mathbb{Z}}^l$, let ${e_j}^{\prime}$ be the element represented as $(0,{e_{0,j}}^{\prime}) \in H_{n}(M;\mathbb{Z}) \oplus {\mathbb{Z}}^l${\rm :} ${e_{0,j}}^{\prime}$ is the sequence of $l$ integers whose $j$-th component is $1$ and the other components are $0$.

In this situation, we can do an ATSS to $f$ such that the ${\bigcup}_{j=1}^l c_j(S_j)$ is the generating image of the operation and have a new map $f^{\prime}:M^{\prime} \rightarrow {\mathbb{R}}^n$ satisfying the following additional properties where ${q_{f^{\prime}}}^{\ast}:H^i(W_{f^{\prime}};\mathbb{Z}) \rightarrow H^i(M^{\prime};\mathbb{Z})$ is the homomorphism induced from the quotient map $q_{f^{\prime}}:M^{\prime} \rightarrow W_{f^{\prime}}$ for each $i$.
\begin{itemize}
\item For $e_{j,j^{\prime}}$ and ${e_j}^{\prime}$, we can define duals ${\rm D}(e_{j,j^{\prime}})$ and ${\rm D}({e_j}^{\prime})$ and by the statement above, we can define the duals ${\rm D}({q_{f^{\prime}}}_{\ast}(e_{j,j^{\prime}}))$ and ${\rm D}({q_{f^{\prime}}}_{\ast}({e_j}^{\prime}))$.
\item Two relations ${q_{f^{\prime}}}^{\ast}({\rm D}({q_{f^{\prime}}}_{\ast}(e_{j,j^{\prime}})))={\rm D}(e_{j,j^{\prime}})$ and ${q_{f^{\prime}}}^{\ast}({\rm D}({q_{f^{\prime}}}_{\ast}({e_j}^{\prime})))={\rm D}({e_j}^{\prime})$ hold.
\item The product of ${\rm D}(e_{j_1,{j_1}^{\prime}})$ and ${\rm D}(e_{j_2,{j_2}^{\prime}})$ is\\
$h_{{\Sigma}_{j^{\prime \prime}=1}^{j_1-1} (l_{j^{\prime \prime}})+{j_1}^{\prime},{\Sigma}_{j^{\prime \prime}=1}^{j_2-1} (l_{j^{\prime \prime}})+{j_2}^{\prime}}{\rm D}({e_{j_1}}^{\prime})+h_{{\Sigma}_{j^{\prime \prime}=1}^{j_2-1} (l_{j^{\prime \prime}})+{j_2}^{\prime},{\Sigma}_{j^{\prime \prime}=1}^{j_1-1} (l_{j^{\prime \prime}})+{j_1}^{\prime}}{\rm D}({e_{j_2}}^{\prime})$.
\end{itemize}
\end{enumerate}
\end{Prop}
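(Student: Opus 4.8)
The plan is to reduce the statement to Propositions \ref{prop:2} and \ref{prop:3} by decomposing each polyhedron $S_j$ into the $l_j$ standard spheres $S_{j,1},\dots,S_{j,l_j}$ from which it is built, and by treating the finitely many identified points $S(S_j)$ separately. First I would note that, by Proposition \ref{prop:5}, the ATSS is assembled from the local models appearing in the proof of Proposition \ref{prop:1}: a product of ${\tilde f}_{m-n,0}$ with an identity near a new singular value with a one-point preimage, a trivial $S^{m-n}$-bundle near a new regular value, and the self-intersection model near each crossing $p_{j'}$. Condition (4) of Definition \ref{def:6} forces the image of $\sqcup_j c_j(S(S_j))$ to be disjoint both from every crossing and from the image of $\sqcup_j c_j(S_j-S(S_j))$, so the identified points never enter any of these local models; consequently the construction over a neighbourhood of $\bigcup_j c_j(S_j)$ differs from the sphere case treated in Proposition \ref{prop:2} only in that $N(S_j)$ is now a regular neighbourhood of a wedge-type polyhedron, while $f^{-1}(\bigcup_j c_j(N(S_j)))$ remains a trivial $S^{m-n}$-bundle by hypothesis. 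Hence $f'$ and $M'$ are produced exactly as before.

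Next I would carry out the homology bookkeeping, mirroring the Mayer--Vietoris / long exact sequence argument behind Proposition \ref{prop:2}. The two families of classes attached there in degrees $m-n$ and $n$ --- a regular fibre $S^{m-n}$ and the product of such a fibre with a section of the new fold over $S_j$ --- are indexed by the connected components of $\bigcup_j c_j(S_j)$; since the identifications make each $S_j$ connected, there are exactly $l$ of these, giving $\mathbb Z^l$ in degrees $m-n$ and $n$. The classes attached in degrees $\frac n2$ and $m-\frac n2$ are instead indexed by $H_{n/2}(S_j)\cong\mathbb Z^{l_j}$ --- the point-identifications only affect $H_0$ and $H_1$, which lie below degree $\frac n2$ --- so summing over $j$ yields $\mathbb Z^{\Sigma_{j=1}^l l_j}$ in degrees $\frac n2$ and $m-\frac n2$, and nothing is added in other degrees. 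The total Stiefel--Whitney class statement follows as in Proposition \ref{prop:2}, the attached pieces being (products of) spheres and disks. The Reeb-space assertions follow by transporting this computation through $q_{f'}$, using the compatibility of ${q_{f'}}_\ast$ with the chosen identifications exactly as in Proposition \ref{prop:2}; the explicit representatives ($E_S\times\{\ast\}$, a regular fibre $S^{m-n}$, and their products with embedded copies of the $S_{j,j'}$) are the ones from that proof, now labelled by pairs $(j,j')$ in the two middle degrees.

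For part (2) I would follow the proof of Proposition \ref{prop:3} essentially verbatim, the only change being the index sets: the degree-$\frac n2$ dual classes ${\rm D}(e_{j,j'})$ correspond to the individual spheres $S_{j,j'}$, whereas the degree-$n$ dual classes ${\rm D}({e_j}^{\prime})$ correspond to the connected polyhedra $S_j$. A crossing of $c_{j_1}(S_{j_1,{j_1}^{\prime}})$ with $c_{j_2}(S_{j_2,{j_2}^{\prime}})$ --- whether or not $j_1=j_2$ --- contributes $\pm1$ to ${\rm D}(e_{j_1,{j_1}^{\prime}})\smile{\rm D}(e_{j_2,{j_2}^{\prime}})$, and, invoking the ``sufficiently many crossings'' hypothesis together with the homology-class manipulations in the proofs of Proposition 3 of \cite{kitazawa8} and Proposition 2 of \cite{kitazawa9}, one arranges the signed count so as to realize the prescribed integer, obtaining the stated value with the flattened index $\Sigma_{j''=1}^{j-1} l_{j''}+j'$ substituted into $H$; the product lands in the span of ${\rm D}({e_{j_1}}^{\prime})$ and ${\rm D}({e_{j_2}}^{\prime})$ because the relevant degree-$n$ cycles are the component-level ones. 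The relations ${q_{f'}}^\ast({\rm D}({q_{f'}}_\ast(e_{j,j'})))={\rm D}(e_{j,j'})$ and ${q_{f'}}^\ast({\rm D}({q_{f'}}_\ast({e_j}^{\prime})))={\rm D}({e_j}^{\prime})$ are read off from these explicit representatives, again as in Proposition \ref{prop:3}.

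The step I expect to be the main obstacle is precisely the point at which the per-sphere indexing in degree $\frac n2$ collapses onto the per-component indexing in degree $n$ in the cup-product formula: one must check that a crossing of two spheres lying inside the \emph{same} polyhedron $S_j$ still contributes a genuine term landing in $\mathbb Z\cdot{\rm D}({e_j}^{\prime})$, and that the regular neighbourhood $N(S_j)$ --- which near a point of $S(S_j)$ is not a product --- contributes no homology beyond $\mathbb Z^{l_j}$ in the two middle degrees. Both points are settled by the disjointness built into Definition \ref{def:6}(4), which confines all nontrivial behaviour to the smooth locus where the local models of Proposition \ref{prop:1} apply, together with a Mayer--Vietoris comparison of $N(S_j)$ with the disjoint union of tubular neighbourhoods of its sphere pieces.
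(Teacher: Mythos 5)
Your proposal follows essentially the same route as the paper's sketch: decompose each $N(S_j)$ into regular neighbourhoods of the constituent spheres $S_{j,j^{\prime}}$ (the paper phrases this as a boundary connected sum of linear disk bundles over the $S_{j,j^{\prime}}$), reduce the homology count and the cup-product statement to Propositions \ref{prop:2} and \ref{prop:3}, and observe that the identified points $S(S_j)$, being disjoint from all crossings by Definition \ref{def:6}, do not disturb the local models. The only substantive divergence is your representative for the degree-$n$ generators: the cycle you describe (a regular fibre times a section over $S_j$) is dimensionally the degree-$(m-\tfrac{n}{2})$ class, whereas the paper takes the double of $N(S_j)$ mapped onto $c_j(N(S_j))$, a closed $n$-manifold that sidesteps the fact that $S_j$ itself is not a manifold; your per-component count of ${\mathbb{Z}}^l$ in that degree is nevertheless correct.
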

\begin{proof}[A sketch of a proof.]
We can discuss the first statement as the case of Proposition \ref{prop:2}.
In Definition \ref{def:6} or Proposition \ref{prop:5}, we can take a regular neighborhood $N(S_{j,j^{\prime}}) \subset N(S_j)$ of $S_{j,j^{\prime}} \subset S_j$. We can take this regular neighborhood as the total space of a linear bundle over $S_{j,j^{\prime}}$ whose fiber is diffeomorphic to a unit disk. Furthermore, we can do this so that $S_{j,j^{\prime}}$ is regarded as the image of the section taking the origin in the fiber at each point in $S_{j,j^{\prime}}$. Moreover, $N(S_j)$ can be regarded as a manifold represented as a boundary connected sum of the manifolds $N(S_{j,j^{\prime}})$.

Consider a fiber $D_{{\rm F},j}$ of a regular neighborhood $N(S_{j,j^{\prime}}) \subset N(S_j)$ of $S_{j,j^{\prime}} \subset S_j$ such that $c_j(D_{{\rm F},j})$ contains no crossings of the family $\{c_j {\mid}_{\partial N(S_{j,j^{\prime}})}\}$ of the immersions,
set $Q_{{\rm F},j,j^{\prime}}:=f^{-1}(c_j(D_{{\rm F},j,j^{\prime}}))$ and consider the restriction of $f$ to $Q_{{\rm F},j,j^{\prime}}$. 
Note that $Q_{{\rm F},j,j^{\prime}}$ is a compact manifold diffeomorphic to the manifold $E_D$ in the sketch of the proof of Proposition \ref{prop:2}.

We can construct a desired map $f^{\prime}:M^{\prime} \rightarrow {\mathbb{R}}^n$ on a suitable manifold $M^{\prime}$.

We explain a generator of the $j$-th summand of ${\mathbb{Z}}^l$ in $H_{i}(M;\mathbb{Z}) \oplus {\mathbb{Z}}^l$, isomorphic to $H_{i}(M^{\prime};\mathbb{Z})$, for $i=m-n,n$.

For $i=m-n$, it is represented by a connected component of the preimage of a regular value, diffeomorphic to $S^{m-n}$.

For $i=n$, it is represented by a smooth submanifold represented by a double of $N(S_j)$ mapped to the image $c_j(N(S_j))$ by $f$.

For a generator of the (${\Sigma}_{j^{\prime \prime}=1}^{j-1} (l_{j^{\prime \prime}})+j^{\prime}$)-th summand of ${\mathbb{Z}}^{{\Sigma}_{j^{\prime \prime}=1}^l l_{j^{\prime \prime}}}$ in $H_{i}(M;\mathbb{Z}) \oplus {\mathbb{Z}}^{{\Sigma}_{j^{\prime}=1}^l l_{j^{\prime}}}$, isomorphic to $H_{i}(M^{\prime};\mathbb{Z})$ ($i=\frac{n}{2},m-\frac{n}{2}$). we can discuss as the sketch of the proof of Proposition \ref{prop:2}.
We can see that the remaining properties of the first statement hold by the way of this construction.

We can show the second statement as Proposition \ref{prop:3}. The assumption that we use a normal system of submanifolds and subpolyhedra instead of a normal system of submanifolds does not change the argument essentially.

\end{proof}
We can also show the following theorem.

\begin{Thm}
\label{thm:6}
In the situation of Theorem \ref{thm:5}, consider an arbitrary integral cohomology ring $C_R$ obtained in the following way starting from $H^{\ast}(M;\mathbb{Z})$. 
\begin{enumerate}
\item Choose finitely many elements of basis $\{{b_{j,2}}^{\ast}\}$ and identify them.
\item Obtain the quotient integral cohomology ring in a canonical way.
\item We do a finite iteration of the following procedure: choose finitely many elements of the basis obtained in a canonical way and identify them and obtain the quotient in a canonical way
\end{enumerate}

Thus there exists a $7$-dimensional closed and simply-connected spin manifold $M_{C,R}$ whose cohomology ring is isomorphic to $C_R$ and a fold map $f:M_{C,R} \rightarrow {\mathbb{R}^4}$ satisfying the following properties
\begin{enumerate}
\item The 3rd and the 5th Stiefel-Whitney classes of $M$ vanish.
\item Let $d$ be the isomorphism $d:H_4(M;\mathbb{Z}) \rightarrow H^4(M;\mathbb{Z})$ defined by corresponding the duals. The first Pontryagin class of $M$ is regarded as $4d \circ {\phi}_4(p) \in H^4(M;\mathbb{Z})$. The 4th Stiefel-Whitney class of $M$ vanishes.
\item The immersion $f {\mid}_{S(f)}$ is normal.
\item The index of each singular point is $0$ or $1$.
\item Preimages of regular values in the image of $f$ are diffeomorphic to $S^3$, $S^3 \sqcup S^3$ or $S^3 \sqcup S^3 \sqcup S^3$.
\end{enumerate}
\end{Thm}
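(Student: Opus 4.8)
The plan is to run the construction in the proof of Theorem \ref{thm:5} again, but to replace the family of immersed generating $2$-spheres $\{c_j \colon S_j \to {\rm Int}\, W_{f_0}\}$ by an immersed family of \emph{polyhedra} in the sense of Definition \ref{def:6}: each $S_j$ is a finite union of $2$-dimensional standard spheres glued along finitely many pairs of points (a ``bouquet'' of $2$-spheres), the constituent spheres of $S_j$ being grouped so as to match the partition of the basis elements that is identified in passing from $H^{\ast}(M;\mathbb{Z})$ to $C_R$. First I would invoke Proposition \ref{prop:5} to perform an ATSS along such a normal system of submanifolds and subpolyhedra compatible with a special generic map $f_0$ as in Example \ref{ex:2} on a connected sum of copies of $S^2 \times S^5$, together with points in ${\rm Int}\, W_{f_0}$ treated as in Remark \ref{rem:1}; the local models in the proof of Proposition \ref{prop:1} (and of Proposition \ref{prop:5}) guarantee that the resulting $f \colon M_{C,R} \to {\mathbb{R}}^4$ has only singular points of index $0$ and $1$, has normal singular value set, and has preimages of regular values diffeomorphic to $S^3$, $S^3 \sqcup S^3$ or $S^3 \sqcup S^3 \sqcup S^3$. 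This yields properties (3), (4), (5).

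Next I would read off the integral homology of $M_{C,R}$ and of its Reeb space from Proposition \ref{prop:6} with $(m,n) = (7,4)$, in the same way that Propositions \ref{prop:2} and \ref{prop:3} are used in the proof of Theorem \ref{thm:5}. The key point is that grouping $l_j$ constituent $2$-spheres into the single bouquet $S_j$ still contributes one new generator of $H_2$ (and, dually, of $H_5$) per constituent sphere, but only one new generator of $H_4$ (and of $H_3$) per bouquet; thus the degree-$4$ and degree-$3$ classes attached to the constituent spheres of one bouquet become a single class, which by Poincar\'e duality and the universal coefficient theorem is exactly the data of identifying the corresponding basis elements ${b_{j,2}}^{\ast}$ and passing to the canonical quotient ring. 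The multiplicative relations are then checked against Proposition \ref{prop:6}(2): all pairwise products of the degree-$2$ classes coming from one bouquet land in a single cyclic subgroup of $H^4(M_{C,R};\mathbb{Z})$ with coefficients governed by the symmetric matrix appearing there, which reproduces the ring structure of the canonical quotient, while all remaining products are as in Theorem \ref{thm:5}, since away from the new subpolyhedra the map and the manifold are unchanged. That $M_{C,R}$ is closed, simply connected and spin, and the assertions about the $3$rd, $4$th and $5$th Stiefel--Whitney classes, follow as in Theorem \ref{thm:5}: only handles of the relevant indices are attached, so $\pi_1$ is unaffected, and Proposition \ref{prop:6} preserves triviality of the total Stiefel--Whitney class.

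The finitely iterated identifications in step (3) of the statement are accommodated by the observation that a finite union of $2$-spheres glued along finitely many pairs of points is again a polyhedron admissible in Definition \ref{def:6}; hence any ring produced by the prescribed finite sequence of canonical quotients is realized either by a single sufficiently elaborate bouquet system or, equivalently, by iterating the ATSS, and properties (1) and (2) follow exactly as in the previous paragraph. Finally, the first Pontryagin class is put into the prescribed form $4 d \circ \phi_4(p)$ by the same bundle-exchange argument over $S^4$ used in the proof of Theorem \ref{thm:5}, applied to small tubular neighborhoods of the connected components of preimages of regular values that generate $H_3(M_{C,R};\mathbb{Z})$.

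I expect the main obstacle to be verifying that the cohomology \emph{ring} of $M_{C,R}$ is precisely the canonical quotient $C_R$, with no spurious extra relations and with all the prescribed relations valid over $\mathbb{Z}$. As in the proof of Theorem \ref{thm:5}, this amounts to tracking the explicit homology and cohomology generators built in the proof of Proposition \ref{prop:6} --- the class of $E_S \times \{\ast\}$, its Poincar\'e dual, and the classes carried by the doubles of the regular neighborhoods $N(S_j)$ --- through the ATSS and pairing them off against Proposition \ref{prop:6}(2); the input that makes this feasible is the freedom, granted by the construction, to arrange sufficiently many crossings for every relevant pair of immersed constituent spheres, so that the matrix governing the degree-$2$ products in Proposition \ref{prop:6}(2) can be chosen as required.
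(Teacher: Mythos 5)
Your proposal follows essentially the same route as the paper: the paper's entire proof of Theorem \ref{thm:6} is the remark that one repeats the proof of Theorem \ref{thm:5}, performing the ATSS of Proposition \ref{prop:5} along a normal system of submanifolds and subpolyhedra (bouquets of $2$-spheres as in Definition \ref{def:6}) and applying Proposition \ref{prop:6} in place of Propositions \ref{prop:2} and \ref{prop:3}, the five listed properties being shown similarly. Your sketch is in fact more detailed than the paper's own, which leaves the rigorous verification to the reader.
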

To prove, we consider an ATSS in Proposition \ref{prop:5} and apply Proposition \ref{prop:6} instead in the situation of the proof of Theorem \ref{thm:5}. 
The five additional properties can be shown similarly. Rigorous proofs are left to readers.

The final remark is as follows.
\begin{Rem}
\label{rem:2}
Most maps in Theorems \ref{thm:1}--\ref{thm:6} can be constructed as fold maps such that we can represent as the compositions of the smooth embeddings into the one-dimensional higher Euclidean spaces with the canonical projections. More precisely, if the Pontryagin classes of higher degrees vanish, then we can construct such maps. It has been shown as a corollary in \cite{eliashberg2} (\cite{eliashberg}) that closed manifolds admitting smooth embeddings into the one-dimensional higher Euclidean spaces admit fold maps into arbitrary Euclidean spaces whose dimensions are smaller than or equal to those of the manifolds.
\end{Rem}

\end{document}